\providecommand{\tabularnewline}{\\}
\numberwithin{equation}{section}
\numberwithin{figure}{section}
\theoremstyle{plain}
\newtheorem{thm}{Theorem}[section]
  \theoremstyle{plain}
  \newtheorem{lem}[thm]{Lemma}
  \theoremstyle{definition}
  \newtheorem{defn}[thm]{Definition}
  \theoremstyle{plain}
  \newtheorem{prop}[thm]{Proposition}
 \theoremstyle{definition}
  \newtheorem{example}[thm]{Example}
  \theoremstyle{plain}
  \newtheorem{cor}[thm]{Corollary}
  \theoremstyle{definition}
  \newtheorem{problem}[thm]{Problem}
\begin{document}

\title{Bounds on the Rubbling and Optimal Rubbling Numbers of Graphs}

\author{Gyula Y. Katona}

\author{N\'Andor Sieben}
\begin{abstract}
A pebbling move on a graph removes two pebbles at a vertex and adds
one pebble at an adjacent vertex. Rubbling is a version of pebbling
where an additional move is allowed. In this new move, one pebble
each is removed at vertices $v$ and $w$ adjacent to a vertex $u$,
and an extra pebble is added at vertex $u$. A vertex is reachable
from a pebble distribution if it is possible to move a pebble to that
vertex using rubbling moves. The rubbling number is the smallest number
$m$ needed to guarantee that any vertex is reachable from any pebble
distribution of $m$ pebbles. The optimal rubbling number is the smallest
number $m$ needed to guarantee a pebble distribution of $m$ pebbles
from which any vertex is reachable. We give bounds for rubbling and
optimal rubbling numbers. In particular, we find an upper bound for
the rubbling number of $n$-vertex, diameter $d$ graphs, and estimates
for the maximum rubbling number of diameter $2$ graphs. We also give
a sharp upper bound for the optimal rubbling number, and sharp upper
and lower bounds in terms of the diameter.
\end{abstract}

\address{Budapest University of Technology and Economics Faculty of Electrical
Engineering and Informatics, Department of Computer Science and Information
Theory, H-1521 Budapest Po. box. 91, Hungary }

\email{kiskat@cs.bme.hu}

\address{Northern Arizona University, Department of Mathematics and Statistics,
Flagstaff AZ 86011-5717, USA}

\email{nandor.sieben@nau.edu}

\keywords{pebbling, rubbling}

\subjclass[2000]{05C99}

\maketitle
\def\a{\text{$\shortrightarrow$}}

\date{\the\month/\the\day/\the\year}

\section{Introduction}

Graph pebbling has its origin in number theory. It is a model for
the transportation of resources. Starting with a pebble distribution
on the vertices of a simple connected graph, a \emph{pebbling move}
removes two pebbles from a vertex and adds one pebble at an adjacent
vertex. We can think of the pebbles as fuel containers. Then the loss
of the pebble during a move is the cost of transportation. A vertex
is called \emph{reachable} if a pebble can be moved to that vertex
using pebbling moves. There are several questions we can ask about
pebbling. How many pebbles will guarantee that every vertex is reachable
(\emph{pebbling number}), or that all vertices are reachable at the
same time (\emph{cover pebbling number})? How can we place the smallest
number of pebbles such that every vertex is reachable (\emph{optimal
pebbling number})? For a comprehensive list of references for the
extensive literature see the survey papers \cite{Hurlbert_survey1,Hurlbert_survey2}. 

\emph{Graph rubbling} is an extension of graph pebbling. In this version,
we also allow a move that removes a pebble each from the vertices
$v$ and $w$ that are adjacent to a vertex $u$, and adds a pebble
at vertex $u$. The basic theory of rubbling and optimal rubbling
is developed in \cite{BelSie}. The rubbling number of complete $m$-ary
trees are studied in \cite{Danz}, while the rubbling number of caterpillars
are determined in \cite{Papp}.

The current paper extends the theory of graph rubbling by providing
bounds for the rubbling numbers of graphs. In Section~3, we give
an upper bound for the rubbling number in terms of the number of vertices
and the diameter of the graph. In Sections~4--5, we investigate how
big the rubbling number of diameter 2 graphs can be. Let $f(n,d)$
be the maximum rubbling number of diameter $d$ graphs with $n$ vertices.
We construct a family of graphs whose rubbling numbers match all known
values of $f(n,2)$. We also prove an upper bound for $f(n,2)$. Similar
questions for pebbling are studied in \cite{diamthree,diamtwo}, more
details are given in Section~5. In Section~6, we give a sharp upper
bound for the optimal rubbling number of a graph in terms of the number
of vertices. We also give sharp upper and lower bounds in terms of
the diameter. Similar results for the optimal pebbling number are
presented in \cite{Bunde_optimal,Optimal}. Our results are extensions
of these.

\section{Preliminaries}

Throughout the paper, let $G$ be a simple connected graph. We use
the notation $V(G)$ for the vertex set and $E(G)$ for the edge set.
A \emph{pebble function} on a graph $G$ is a function $p:V(G)\to\mathbb{Z}$
where $p(v)$ is the number of pebbles placed at $v$. A \emph{pebble
distribution} is a nonnegative pebble function. The \emph{size} of
a pebble distribution $p$ is the total number of pebbles $\sum_{v\in V(G)}p(v)$.
We are going to use the notation $p(v_{1},\ldots,v_{n},*)=(a_{1},\ldots,a_{n},q(*))$
to indicate that $p(v_{i})=a_{i}$ for $i\in\{1,\ldots,n\}$ and $p(w)=q(w)$
for all $w\in V(G)\setminus\{v_{1},\ldots,v_{n}\}$.

Consider a pebble function $p$ on the graph $G$. If $\{v,u\}\in E(G)$
then the \emph{pebbling move} $(v,v\a u)$ removes two pebbles at
vertex $v$, and adds one pebble at vertex $u$ to create a new pebble
function\[
p_{(v,v\a u)}(v,u,*)=(p(v)-2,p(u)+1,p(*)).\]
If $\{w,u\}\in E(G)$ and $v\not=w$, then the \emph{strict rubbling
move} $(v,w\a u)$ removes one pebble each at vertices $v$ and $w$,
and adds one pebble at vertex $u$ to create a new pebble function\[
p_{(v,w\a u)}(v,w,u,*)=(p(v)-1,p(w)-1,p(u)+1,p(*)).\]
A \emph{rubbling move is} either a pebbling move or a strict rubbling
move. A \emph{rubbling sequence} is a finite sequence $s=(s_{1},\ldots,s_{k})$
of rubbling moves. The pebble function gotten from the pebble function
$p$ after applying the moves in $s$ is denoted by $p_{s}$. The
pebble function gotten after applying the moves in a multiset $S$
of rubbling moves in any order is denoted by $p_{S}$. The concatenation
of the rubbling sequences $r=(r_{1},\ldots,r_{k})$ and $s=(s_{1},\ldots,s_{l})$
is denoted by $rs=(r_{1},\ldots,r_{k},s_{1},\ldots,s_{l})$.

A rubbling sequence $s$ is \emph{executable} from the pebble distribution
$p$ if $p_{(s_{1},\ldots,s_{i})}$ is nonnegative for all $i$. A
vertex $v$ of $G$ is \emph{reachable} from the pebble distribution
$p$ if there is an executable rubbling sequence $s$ such that $p_{s}(v)\ge1$.
The \emph{rubbling number} $\rho(G)$ of a graph $G$ is the minimum
number $m$ with the property that every vertex of $G$ is reachable
from any pebble distribution of size $m$.

The \emph{optimal rubbling number} $\rho_{\text{opt}}(G)$ of a graph
$G$ is the size of a distribution with the least number of pebbles
from which every vertex is reachable. 

Given a multiset $S$ of rubbling moves on $G$, the \emph{transition
digraph} $T(G,S)$ is a directed multigraph whose vertex set is $V(G)$,
and each move $(v,w\a u)$ in $S$ is represented by two directed
edges $(v,u)$ and $(w,u)$. The transition digraph of a rubbling
sequence $s=(s_{1},\ldots,s_{n})$ is $T(G,s)=T(G,S).$ where $S=\{s_{1},\ldots,s_{n}\}$
is the multiset of moves in $s$. Let $d_{T(G,S)}^{-}$ represent
the in-degree and $d_{T(G,S)}^{+}$ the out-degree in $T(G,S)$. We
simply write $d^{-}$ and $d^{+}$ if the transition digraph is clear
from context.

A multiset $S$ of rubbling moves on $G$ is \emph{balanced} with
a pebble distribution $p$ \emph{at vertex} $v$ if $p_{S}(v)\ge0$.
We say $S$ is \emph{balanced} with $p$ if $S$ is balanced with
$p$ at all $v\in V(G)$, that is, $p_{S}\ge0$. A multiset of rubbling
moves is called \emph{acyclic} if the corresponding transition digraph
has no directed cycles. An element $(v,w\a u)\in S$ is called an
\emph{initial move} of $S$ if $d^{-}(v)=0=d^{-}(w)$ in the transition
digraph. 

An important tool is the following result of \cite{BelSie}.
\begin{lem}
\emph{(No Cycle)} Let $p$ be a pebble distribution on $G$ and $v\in V(G)$.
The following are equivalent.
\begin{enumerate}
\item $v$ is reachable from $p$. 
\item There is a multiset $S$ of rubbling moves such that $S$ is balanced
with $p$ and $p_{S}(v)\ge1$.
\item There is an acyclic multiset $R$ of rubbling moves such that $R$
is balanced with $p$ and $p_{R}(v)\ge1$.
\item Vertex $v$ is reachable from $p$ through an acyclic rubbling sequence.
\end{enumerate}
\end{lem}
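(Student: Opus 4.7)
The plan is to establish the cycle of implications $(1) \Rightarrow (2) \Rightarrow (3) \Rightarrow (4) \Rightarrow (1)$. The endpoints are essentially immediate: for $(1) \Rightarrow (2)$ take $S$ to be the multiset of moves in an executable sequence reaching $v$, as executability makes $p_S \ge 0$ vertex-by-vertex; and $(4) \Rightarrow (1)$ is a direct weakening of the conclusion.

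For $(3) \Rightarrow (4)$ I would induct on $|R|$, with the empty multiset case trivial since then $p_R(v) = p(v) \ge 1$. The inductive step rests on showing that any nonempty acyclic multiset contains an initial move: starting from any move, whenever it fails to be initial one of its sources has positive in-degree, so there is a predecessor move with that source as target; acyclicity forces this backwards chase to terminate at an initial move. For such a move $m = (a, b \a c)$, applying balance at the source(s) together with $d^-(a) = d^-(b) = 0$ yields $p(a) \ge 2$ when $m$ is a pebbling move (where $a = b$), and $p(a), p(b) \ge 1$ when $m$ is strict, so $m$ is executable from $p$. The multiset $R \setminus \{m\}$ is then acyclic and balanced with the distribution $p_{\{m\}}$, with the same value at $v$ as $R$ has with $p$; induction completes the step, and concatenating $m$ in front of the resulting sequence yields the desired acyclic executable sequence.

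For $(2) \Rightarrow (3)$, which is the heart of the lemma, I would take $S$ to be a balanced multiset with $p_S(v) \ge 1$ of minimum possible size and derive a contradiction from the existence of a simple directed cycle $u_0 u_1 \cdots u_{k-1} u_0$ in $T(G,S)$. Since a pebbling move contributes two parallel edges from its source to its target, a simple directed cycle can use at most one edge from any single move; hence the $k$ cycle edges come from $k$ distinct moves $m_0, \ldots, m_{k-1}$, where $m_i$ has target $u_{i+1}$ and $u_i$ among its sources. Setting $S' := S \setminus \{m_0, \ldots, m_{k-1}\}$, a vertex-by-vertex accounting gives $p_{S'} \ge p_S$ pointwise: at each cycle vertex $u_i$ the removal of $m_i$ returns at least one pebble (two if $m_i$ is a pebbling move) while the removal of $m_{i-1}$ costs exactly one, and every other vertex only gains from removals at secondary sources of strict moves. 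Therefore $S'$ is balanced with $p_{S'}(v) \ge p_S(v) \ge 1$ yet is strictly smaller, contradicting the minimality of $|S|$. The main obstacle is precisely this bookkeeping: one must handle the pebbling and strict-rubbling cases uniformly, rule out that a simple cycle uses both parallel edges of a single pebbling move, and correctly track secondary sources of strict moves that may themselves lie on the cycle.
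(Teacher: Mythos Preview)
The paper does not prove this lemma; it is quoted from \cite{BelSie}, so there is no in-paper argument to compare against. Your proof is the standard one and is correct in all its substantive parts: the minimality-plus-cycle-deletion argument for $(2)\Rightarrow(3)$, including the observation that a simple directed cycle uses at most one of the two edges contributed by any single move and the vertex-by-vertex $p_{S'}\ge p_S$ accounting, is exactly right, and the initial-move extraction for $(3)\Rightarrow(4)$ is the natural approach.

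One point to tighten in $(3)\Rightarrow(4)$: as written, your induction hypothesis is the implication $(3)\Rightarrow(4)$ itself, which only promises \emph{some} acyclic executable sequence $t$ from $p_{\{m\}}$ reaching $v$, not necessarily an ordering of $R\setminus\{m\}$. Prepending $m$ to an arbitrary such $t$ need not give an acyclic sequence, since the moves of $t$ are not controlled. The clean fix is to strengthen the statement proved by induction to ``every acyclic multiset $R$ balanced with $p$ admits an executable linear ordering'' (the condition $p_R(v)\ge 1$ plays no role in the ordering part). Then the sequence returned by the inductive call is an ordering of $R\setminus\{m\}$, prepending $m$ gives an ordering of $R$, and acyclicity is inherited directly from $T(G,R)$.
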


\section{Upper bound on the rubbling number}

All the known upper bounds for the pebbling number $\pi$ are also
upper bounds for the rubbling number since $\rho\le\pi$. The following
result is the rubbling version of the upper bound $\pi(G)\le(n-d)(2^{d}-1)+1$
\cite[Theorem 1]{improved}. The difference between the pebbling upper
bound and the rubbling upper bound is $2^{d-1}(n-d-1)\ge0$. The improvement
is $0$ for $P_{n}$ (the path on $n$ vertices) since then $d=n-1$. 
\begin{thm}
If $G$ is a graph with $n$ vertices and diameter $d$, then\[
\rho(G)\le(n-d+1)(2^{d-1}-1)+2.\]
\end{thm}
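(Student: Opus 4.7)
The plan is to fix an arbitrary target vertex $r\in V(G)$ and choose a BFS spanning tree $T$ of $G$ rooted at $r$, of depth $D\le d$. Because every rubbling sequence using only edges of $T$ is a valid rubbling sequence in $G$, it suffices to show that whenever a distribution $p$ fails to reach $r$ using moves in $T$, then $\sum_v p(v)\le(n-d+1)(2^{d-1}-1)+1$; this yields $\rho(G)\le(n-d+1)(2^{d-1}-1)+2$.

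The main tool is a recursive auxiliary function $f\colon V(T)\to\mathbb{Z}_{\ge0}$, set to $f(v)=p(v)$ at leaves and $f(v)=p(v)+\lfloor S(v)/2\rfloor$ at internal nodes, where $S(v):=\sum_{c\text{ child of }v}f(c)$. A short tree-induction, justified by the No Cycle Lemma (any acyclic rubbling sequence on a rooted tree may be scheduled bottom-up), identifies $f(v)$ with the maximum number of pebbles that can be accumulated at $v$ by rubbling moves confined to the subtree rooted at $v$. So $r$ unreachable through $T$ is equivalent to $f(r)=0$, which in turn forces $S(r)\le 1$---the crucial rubbling constraint that distinguishes this proof from the pebbling bound.

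For the counting, combining $p(v)=f(v)-\lfloor S(v)/2\rfloor$ with the identity $\sum_{v\ne r}f(v)=\sum_v S(v)$ (each non-root $f$-value appears exactly once as a summand in its parent's $S$) gives $\sum_v p(v)=\sum_v\lceil S(v)/2\rceil$. Let $L_k$ denote the $k$th level of $T$, set $F_k:=\sum_{v\in L_k}f(v)$, $n_k:=|L_k|$, and let $m_k$ be the number of non-leaves in $L_k$. Summing $S(v)\le 2f(v)+1$ over $v\in L_{k-1}$ produces the recursion $F_k\le 2F_{k-1}+m_{k-1}$, which together with $F_0=0$ and $m_0=1$ unrolls to $F_k\le\sum_{i=0}^{k-1}2^{k-1-i}m_i$. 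Substituting back into $\sum p\le\tfrac12\bigl(\sum_{k=1}^D F_k+\sum_{k=0}^{D-1}m_k\bigr)$ yields
\[\sum_v p(v)\le\sum_{i=0}^{D-1}m_i\cdot 2^{D-i-1}.\]

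The remainder is elementary. Using $m_i\le n_{i+1}$ and maximizing over level sizes subject to $n_0=1$, $n_j\ge 1$ for $1\le j\le D$, and $\sum_j n_j=n$ (all slack goes to $n_2$, since $i=1$ carries the largest weight among $i\ge 1$), the bound becomes $(n-D+3)\,2^{D-2}-1$. A routine check, using that the difference equals $(n-D-1)(2^{D-2}-1)\ge 0$, shows this is at most $(n-D+1)(2^{D-1}-1)+1$; this right-hand side is increasing in $D$, so replacing $D$ by $d$ finishes the argument (the degenerate cases $D\le 1$ are handled by direct inspection). The main obstacle I anticipate is the careful verification that $f(v)$ really equals the maximum subtree accumulation---one must confirm that no clever interleaving of pebbling and strict rubbling moves inside the subtree beats the greedy bottom-up procedure encoded by the recursion, and this is where the No Cycle Lemma is indispensable.
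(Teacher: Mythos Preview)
Your proof is correct but takes a genuinely different route from the paper's. The paper argues by a greedy path decomposition: starting from an unreachable goal vertex $v$, it repeatedly peels off a shortest path $Q_i$ from $v$ to the farthest remaining vertex, then applies a two-case pigeonhole (either one $Q_i$ carries at least $2^{l_i}$ pebbles, or two distinct $Q_j,Q_k$ each carry at least $2^{l_j-1},2^{l_k-1}$ pebbles; in either case $v$ is reachable, the second case being exactly where the strict rubbling move enters). This gives the bound in a few lines.

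Your argument instead works on a single BFS tree and encodes reachability by the recursion $f(v)=p(v)+\lfloor S(v)/2\rfloor$, then converts $f(r)=0$ into the identity $\sum_v p(v)=\sum_v\lceil S(v)/2\rceil$ and a level recursion $F_k\le 2F_{k-1}+m_{k-1}$. The machinery is heavier, and the one genuinely delicate point is the one you flag yourself: that $f(r)\ge 1$ exactly characterises reachability of $r$ in $T$ (the No Cycle Lemma reduces to acyclic move-sets, and on a tree this forces a consistent edge orientation, after which a short induction on depth shows no move-set can put more than $f(v)$ pebbles at $v$). Once that is in hand, your computations are correct; in particular the rubbling-specific step is that $f(r)=0$ forces $S(r)\le 1$ (not merely $f(c)\le 1$ for each child $c$, as in pebbling), which is what drives $F_0=0$.

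A pleasant by-product of your route is that the intermediate inequality $\sum_v p(v)\le (n-D+3)\,2^{D-2}-1$ is actually \emph{sharper} than the stated bound whenever $D\ge 3$ and $n>D+1$ (the slack being $(n-D-1)(2^{D-2}-1)$, as you note). The paper's path-peeling argument is shorter and more transparent, but your tree-based analysis is more systematic and looks easier to push further if one wanted to improve the constant.
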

\begin{proof}
The statement clearly holds if $n=1$ since then $d=0$, so we may
assume that $n\ge2$. Suppose $p$ is a distribution of pebbles from
which vertex $v$ is not reachable. Let $v_{1}$ be a vertex whose
distance is maximal from $v$ and let $Q_{1}$ be the shortest path
between $v$ and $v_{1}$. Recursively define $v_{i+1}$ to be a vertex
in $V(G)\setminus\cup_{j=1}^{i}V(Q_{j})$ whose distance is maximal
from $v$, and define $Q_{i+1}$ to be the shortest path between $v$
and $v_{i+1}$. The recursion must stop after $m\in\mathbb{N}$ steps.
Let $l_{i}$ be the length of $Q_{i}$. Then we have $n>d\ge l_{1}\ge\cdots\ge l_{m}\ge1$
and $m\le n-l_{1}$. If \[
|p|\ge\sum_{i=1}^{m}(2^{l_{i}-1}-1)+2^{l_{1}-1}+1\]
then either some $Q_{i}$ has at least $2^{l_{i}}$ pebbles or there
are some $Q_{j}$ and $Q_{k}$ with at least $2^{l_{j}-1}$ and $2^{l_{k}-1}$
pebbles respectively. In either case $v$ is reachable. So we must
have\begin{eqnarray*}
|p| & < & \sum_{i=1}^{m}(2^{l_{i}-1}-1)+2^{l_{1}-1}+1\le\sum_{i=1}^{m}(2^{l_{1}-1}-1)+2^{l_{1}-1}+1\\
 & = & (m+1)(2^{l_{1}-1}-1)+2\le(n-l_{1}+1)(2^{l_{1}-1}-1)+2\\
 & \le & (n-d+1)(2^{d-1}-1)+2.\end{eqnarray*}
The last inequality follows from the fact that $l_{1}\mapsto(n-l_{1}+1)(2^{l_{1}-1}-1)$
is increasing for $0<l_{1}<n$.
\end{proof}
The upper bound is sharp for $d=0$ since $\rho(K_{1})=1$ and for
$d=1$ since $\rho(K_{n})=2$ for $n>1$. It is also sharp for $d=n-1$
since $\rho(P_{n})=2^{n-1}$. If the diameter of $G$ is 2, then the
upper bound becomes $\rho(G)\le n+1$. This is no surprise since $\rho(G)\le\pi(G)$
and we know \cite{diamtwo} that $\pi(G)$ is either $n$ or $n+1$.
However, this upper bound is not sharp.

\section{Lower bound for $f(n,2)$}

\begin{figure}
\input{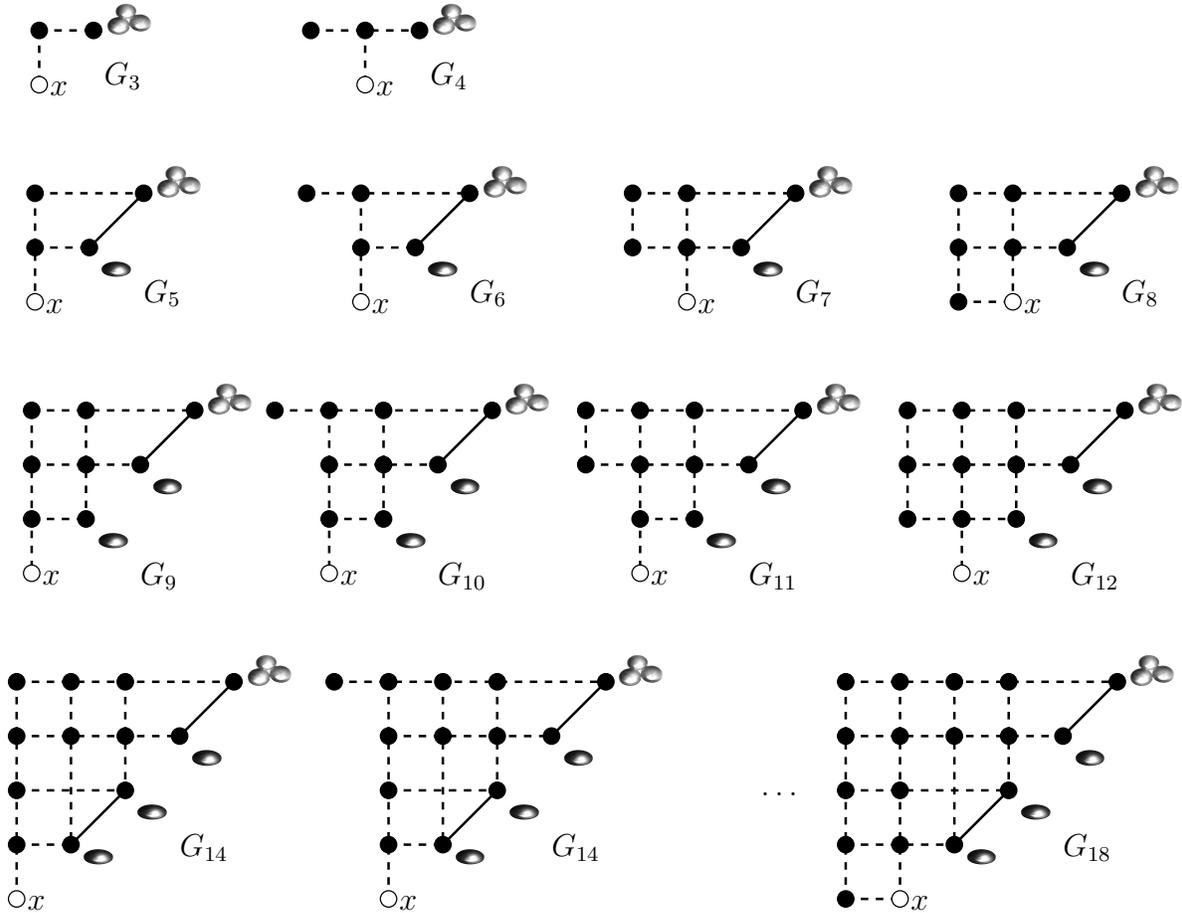}

\caption{\label{fig:Gn}Schematic representation of the graphs $G_{3},\ldots,G_{18}$.
The solid lines are edges of the graphs. The dashed lines indicate
the fact that any two vertices on a horizontal or a vertical line
are connected by an edge. The goal vertex $x$ is not reachable from
the pebble distributions shown on the figures.}

\end{figure}

\begin{figure}
\input{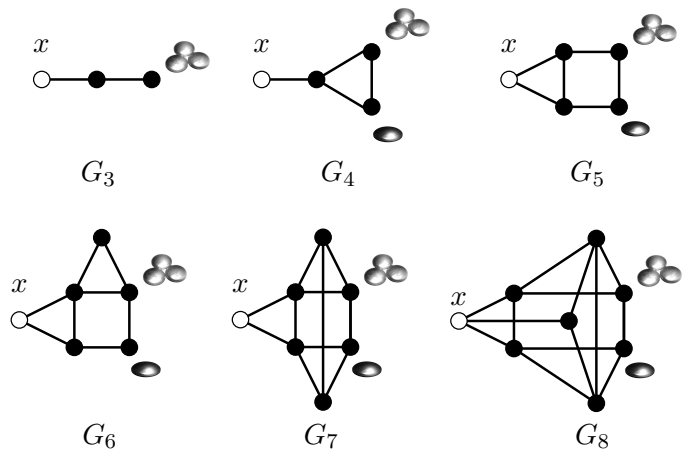}

\caption{\label{fig:Gn2}The graphs $G_{3},\ldots,G_{8}$. The goal vertex
$x$ is not reachable from the pebble distributions shown on the figures.}

\end{figure}

There is no lower bound that forces $\rho$ to grow with the number
of vertices of the graph. In fact $\rho(K_{n})=2$ for all $n$. The
only known lower bound $\rho(G)\ge2^{d}$ for the rubbling number
$\rho$ is coming from the diameter $d$ of the graph $G$. So we
could ask whether $\{\rho(G)\mid\text{diam}(G)=d\}$ is a finite set
for all $d\ge2$. The family of star shaped graphs constructed in
\cite{diamthree} can be used to show that this is not the case for
$d\ge3$. For $d=2$ we need a more elaborate construction.

Our aim is to construct a graph $G_{n}$ for any given $n\ge3$ with
diameter 2 and a high rubbling number. Since the graph is not so easy
to describe directly by giving the vertex and edge sets, we first
define a simpler graph, then we make modifications on it to reach
the final construction.

For a positive integer $s$, let $H_{s}$ be the simple graph defined
by\begin{align*}
V(H_{s}) & =\{(i,j)\mid1\le i\le j\le s\}\\
E(H_{s}) & =\{\{(i_{1},j_{1}),(i_{2},j_{2})\}\mid i_{1}=i_{2}\text{ or }j_{1}=j_{2}\}.\end{align*}

Clearly $|V(H_{s})|=(s+1)s/2$. Now we show that $\text{diam}(H_{n})=2$.
Take two different vertices $(i_{1},j_{1})$ and $(i_{2},j_{2})$
where $i_{1}\leq i_{2}$. If either $i_{1}=i_{2}$ or $j_{1}=j_{2}$,
then they are adjacent. Otherwise $(i_{1},j_{2})\in V(H_{s})$ is
a common neighbor of $(i_{1},j_{1})$ and $(i_{2},j_{2})$, so their
distance is 2.

Now we modify $H_{s}$ by deleting a few vertices and adding a few
more edges in the following way. If $s$ is odd and $s\ge3$, then
delete the vertices\[
(s-1,s),(s-3,s-2),(s-5,s-4),\ldots,(2,3)\]
and add the edges\[
\{(s,s),(s-1,s-1)\},\{(s-2,s-2),(s-3,s-3)\},\ldots,\{(3,3),(2,2)\}.\]
 If $s$ is even, then delete the vertices\[
(s-1,s),(s-3,s-2),(s-5,s-4),\ldots,(3,4)\]
 and add the edges\[
\{(s,s),(s-1,s-1)\},\{(s-2,s-2),(s-3,s-3)\},\ldots,\{(4,4),(3,3)\}.\]

Let $H'_{s}$ denote the graph that is obtained. Clearly $|V(H'_{s})|=(s+1)s/2-\lfloor(s-1)/2\rfloor$.
One can see that $\text{diam}(H'_{s})=2$ holds too, since any pair
of vertices whose unique common neighbor was deleted, now is either
connected by an edge or has a new common neighbor on the spine. 
\begin{defn}
If for a given $n\ge3$ we have $|V(H'_{s})|=n$ for some $s$, then
let $G_{n}=H'_{s}$. Thus, we have the construction of $G_{n}$ for
$n=3,5,9,13,\dots$. For the values of $n$ where $|V(H'_{s})|<n<|V(H'_{s+1})|$,
the construction is given by adding some vertices and edges to $H'_{s}$.
We add the vertices $(0,s),(0,s-1),(0,s-2),\ldots,(0,1)$ one by one
until we reach the required $n$ vertices. A new vertex $(0,j)$ is
adjacent to another vertex $(i',j')$ if either $i'=0$ or $j=j'$. 
\end{defn}
A visualization of the graph family $G_{n}$ is shown in Figures~\ref{fig:Gn}
and \ref{fig:Gn2}. Roughly speaking, we add the new vertices on the
left of the graph, starting at the top row and continuing towards
the bottom. We create new edges to keep the general edge structure
of the graph. We stop adding new vertices before we reach the number
of vertices in $H'_{s+1}$. This means that we stop at vertex $(0,1)$
if $s$ is odd and stop at $(0,2)$ if $s$ is even. Graphs $G_{8}$
and $G_{12}$ shown in Figure~\ref{fig:Gn} illustrate these differently
placed last new vertices. Note that $G_{9}=H'_{4}$ and $G_{13}=H'_{5}$. 

Note that\begin{align*}
|V(H'_{s})|+s & =(s+1)s/2-\lfloor(s-1)/2\rfloor+s\\
 & =(s+2)(s+1)/2-\lfloor s/2\rfloor-1=|V(H'_{s+1})|-1\end{align*}
if $s$ is odd, and $|V(H'_{s})|+s-1=|V(H'_{s+1})|-1$ if $s$ is
even. 

The $i$-th \emph{row} of $G_{n}$ is $R_{i}=\{(i',j')\in V(G_{n})\mid i'=i\}$
while the $i$-th \emph{column} of $G_{n}$ is $C_{i}=\{(i',j')\in V(G_{n})\mid j'=j\}$.
The \emph{spine} of $G_{n}$ is $\{(i,i)\in V(G_{n})\}$. A short
calculation shows that $G_{n}$ has $\left\lfloor \sqrt{2n-1}\right\rfloor $
rows. It is easy to see that $\text{diam}(G_{n})=2$. 
\begin{prop}
For $n\ge3$ we have $\rho(G_{n})\le\left\lfloor \sqrt{2n-1}\right\rfloor +2$.\end{prop}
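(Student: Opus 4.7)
The plan is to argue by contradiction. Fix a target vertex $x=(a,b)$ and suppose $p$ is a pebble distribution of size $r+2=\lfloor\sqrt{2n-1}\rfloor+2$ from which $x$ is not reachable. Since $x$ is unreachable, $p(x)=0$; moreover $p(N(x))\le 1$, for any two pebbles on $N(x)$ combine into one pebble on $x$ by a pebbling move (if co-located) or by a strict rubbling move through the common neighbor $x$ (if at two distinct vertices). Writing $V_2=V(G_n)\setminus N[x]$, this forces $p(V_2)\ge r+1$.

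Next I exploit the fact that every row $R_i$ and every column $C_j$ of $G_n$ is a clique: two pebbles sharing a row or column can be converted by a single move, whether pebbling or strict rubbling, into a pebble at any chosen third vertex of that row or column. Also, for every $(i,j)\in V_2$ at least one of $(a,j)$ and $(i,b)$ lies in $V(G_n)$, since the alternative would force $j<a$ and $i>b$, contradicting $i\le j$. Hence two pebbles sitting at a common vertex of $V_2$ can always be pebbled into $N(x)$.

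The heart of the argument is the claim that for every $i\ne a$, the row $R_i$ contains at most one pebble of $V_2$. Granting this, $G_n$ has at most $r+1$ rows in total (the rows $R_0, R_1,\ldots,R_r$, with $R_0$ possibly empty), so at most $r$ rows differ from $R_a$, and therefore $p(V_2)\le r$, contradicting $p(V_2)\ge r+1$. To prove the claim, suppose $R_i\cap V_2$ holds two pebbles. If $(i,b)\in V(G_n)$, I consolidate within the clique $R_i$ to place a pebble on $(i,b)\in N(x)$, which together with the existing $N(x)$-pebble gives two pebbles on $N(x)$ and hence reaches $x$. If $(i,b)\notin V(G_n)$ then $i>b\ge a$ and every $(i,j)\in R_i$ satisfies $j\ge i>a$, so $(a,j)\in N(x)$; two pebbles at a common vertex of $R_i$ then pebble directly into $(a,j)$, while two pebbles at distinct vertices of $R_i$ are routed into $N(x)$ by coupling an in-row rubbling move with a further move through a suitable column, using either the lone $N(x)$-pebble or a second $V_2$-pebble supplied by a pigeonhole count on the remaining $r-1$ pebbles.

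The main obstacle is the final subcase of the key claim, where $(i,b)\notin V(G_n)$ and the two pebbles of $R_i\cap V_2$ lie at distinct vertices. Since every common neighbor of two vertices of $R_i$ is itself in $R_i$, within-row rubbling alone never escapes $V_2$; one must carefully combine such a move with a column move and with either the $N(x)$-pebble or a second $V_2$-pebble, while handling the exceptional geometry near the spine of $G_n$, the added row $R_0$, and the vertices deleted in the construction of $H'_s$.
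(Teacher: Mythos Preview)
Your proposal is not a proof: you explicitly say so in the final paragraph. The ``main obstacle'' you describe --- two pebbles at distinct vertices of a row $R_i$ with $(i,b)\notin V(G_n)$ --- is exactly the case that carries the content, and you do not resolve it. Two observations make the gap concrete. First, your ``easy'' case already assumes an existing pebble on $N(x)$; if $p(N(x))=0$ then moving one pebble to $(i,b)$ does not reach $x$, so the claim ``every row other than $R_a$ has at most one pebble'' is not established even there. Second, when $(i,b)\notin V(G_n)$, every common neighbour of two distinct vertices of the clique $R_i$ lies in $R_i$ itself, so a single strict rubbling move never leaves $R_i$; getting a pebble from $R_i$ to $N(x)$ then genuinely requires coordination with pebbles outside $R_i$, and the pigeonhole you allude to does not by itself supply a second pebble that is usable (the second overloaded row may suffer from the same defect). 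Also, $(i,b)\notin V(G_n)$ need not imply $i>b$: the deleted spine-adjacent vertices such as $(s-1,s)$ give $i<b$ with $(i,b)$ absent.

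The paper's argument sidesteps this entirely by choosing a different partition. With $x=(i_0,j_0)$ it sets $X=R_{j_0}\cup C_{i_0}$ and splits $V(G_n)\setminus X$ not into rows alone, but into the rows $R_j$ with $j>j_0$ and the \emph{partial columns} $C'_i=\{(i,j'):j'<j_0\}$ with $i\ne i_0$. The point of this asymmetric split is that each piece automatically contains a gateway vertex of $X$: a row $R_j$ with $j>j_0$ contains $(i_0,j)\in C_{i_0}$, and a partial column $C'_i$ with $i<j_0$ meets $R_{j_0}$ at $(i,j_0)$ (with a one-step detour along the spine in the exceptional deleted case). Hence two or three pebbles in any piece always consolidate to a pebble on $X$, and four pebbles give two on $X$. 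A short count using $|\mathcal R|+|\mathcal C|\le k-1$ then forces $|p|\le k+1$. Your row-only partition lacks exactly this ``gateway'' property, which is why the argument stalls.
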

\begin{proof}
Let $k=\left\lfloor \sqrt{2n-1}\right\rfloor $ be the number of rows
in $G_{n}$. Let $p$ be a pebble distribution on $G_{n}$ containing
$k+2$ pebbles and suppose that a goal vertex $x=(i_{0},j_{0})$ is
not reachable. We are going to define some collections of rows and
columns, but in one case it is just part of the columns. So let us
first define the partial columns: $C'_{i}=\left\{ (i',j')\mid i'=i\mbox{ and }j'<j_{0}\right\} .$
Now let \begin{gather*}
\mathcal{R}=\{R_{j}\mid j>j_{0}\},\quad R=\cup\mathcal{R},\\
\mathcal{C}=\{C'_{i}\mid i\not=i_{0}\},\quad C=\cup\mathcal{C},\\
X=R_{j_{0}}\cup C_{i_{0}},\end{gather*}
as shown in Figure~\ref{fig:RCX}. Then $|\mathcal{R}|=k-j_{0}$,
$j_{0}-2\leq|\mathcal{C}|\le j_{0}-1$ (note that $C'_{0}$ may be
empty) and $R\cup C\cup X=V(G_{n})$.%
\begin{figure}
\input{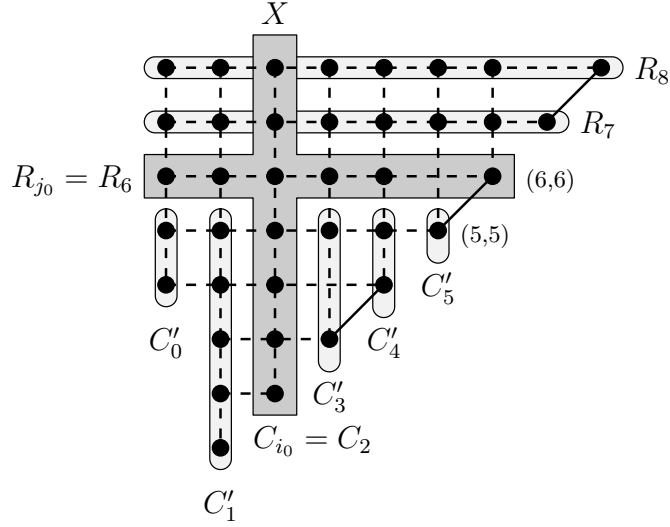}

\caption{\label{fig:RCX}The sets $\mathcal{R},\mathcal{C}$ and $X$..}

\end{figure}

If $R_{j}\in\mathcal{R}$ and there are two or three pebbles on $R_{j}$,
then we can apply a rubbling move on those pebbles and move a pebble
to $C_{i_{0}}$. Row \foreignlanguage{english}{$R_{j}$} cannot have
$4$ or more pebbles, because then we can move two pebbles to $X,$
so $x$ would be reachable. Similarly, if $C'_{i}\in\mathcal{C}$
and $C'_{i}$ has two or three pebbles, then we can move a pebble
to $R_{j_{0}}$, while if it has $4$ or more pebbles, then we can
move two pebbles to $X$. In the exceptional case, when $R_{j_{0}}$
has no vertex in the $i$-th column (like the 5th column on Figure~\ref{fig:RCX}),
we can move the pebbles along the spine to $(i+1,j_{0})$ instead
of $(i,j_{0})$. 

If we can move two pebbles to $X$, then $x$ is clearly reachable.
That means that if $R$ or $C$ has one or two more pebbles than the
size of $\mathcal{R}$ or $\mathcal{C}$ respectively, then we can
move one pebble to $X$. If it has three more pebbles, then we can
move two pebbles to $X.$ So if either $R$ or $C$ contains enough
pebbles to move two pebbles to $X$ or both of them contains enough
pebbles to move one pebble to $X$, then $x$ is reachable.

We know that $p$ can have at most one pebble on $X$. If $X$ has
no pebbles, then the total number of pebbles cannot be more than\[
|\mathcal{R}|+|\mathcal{C}|+2\leq(k-j_{0})+(j_{0}-1)+2=k+1\]
 which is not possible. If $X$ has exactly one pebble, then the total
number of pebbles cannot be more than \[
|\mathcal{R}|+|\mathcal{C}|+1\leq(k-j_{0})+(j_{0}-1)+1=k\]
which is not possible either.
\end{proof}
In the proof of our next result, we are going to need to keep track
of the movement of pebbles during rubbling moves. For this purpose,
we need to replace our pebbles with dependency sets. 
\begin{defn}
A \emph{dependency distribution} is a partition $\mathcal{P}$ of
an \emph{initial pebble set} together with a \emph{location function}
$l:\mathcal{P}\to V(G)$. The elements of $\mathcal{P}$ are called
\emph{dependency sets} or simply pebbles.
\end{defn}
We think of a dependency set as a pebble with some additional information
about the history of the pebble. Dependency distributions replace
pebble distributions. Given a pebble distribution $p$ containing
$m$ pebbles, we can create a \emph{corresponding dependency distribution}
$\mathcal{P}=\{\{1\},\ldots,\{m\}\}$ such that $|\{A\in\mathcal{P}\mid l(A)=v\}|=p(v)$
for all $v\in V(G)$.
\begin{defn}
If $l(A)$ and $l(B)$ are both adjacent to $u$, then the \emph{rubbling
move} $(A,B\a u)$ removes the dependency sets $A$ and $B$ from
$\mathcal{P}$ at the vertices $l(A)$ and $l(B)$ respectively, and
adds a new dependency set $A\cup B$ to $\mathcal{P}$ with location
$l(A\cup B)=u$. A vertex $v$ is \emph{reachable} from a dependency
distribution if a dependency set $A$ with $l(A)=v$ can be created
using rubbling moves.
\end{defn}
Note that the rubbling move $(A,B\a u)$ is essentially the rubbling
move $(l(A),l(B)\a u)$ with some additional information about the
history of the pebbles. It is clear that a vertex is reachable from
a pebble distribution if and only if it is reachable from the corresponding
dependency distribution. Also note that in a rubbling move $(A,B\a u)$
we must have $A\cap B=\emptyset$. 
\begin{defn}
Let $(s_{1},\ldots,s_{k})$ be a sequence of rubbling moves in a dependency
distribution. We say $s_{i}=(A,B\a u)$ is \emph{dependent} on $s_{j}=(C,D\a w)$
if $C\cup D\subseteq A\cup B$. We say that $s_{i}$ and $s_{j}$
are \emph{independent} if $A\cup B$ and $C\cup D$ are disjoint.
\end{defn}
Roughly speaking, $s_{i}$ and $s_{j}$ are independent if they rely
on two disjoint sets of pebbles. It is clear that dependence of rubbling
moves is a transitive relation. Also note that $s_{i}$ and $s_{j}$
are independent if and only if neither $s_{i}$ is dependent on $s_{j}$
nor $s_{j}$ is dependent on $s_{i}$. 
\begin{example}
Consider the initial dependency distribution $l(\{1\},\{2\},\{3\},\{4\},\{5\})=(w,w,w,w,v)$
and the sequence of rubbling moves \[
s_{1}=(\{1\},\{2\}\a u),s_{2}=(\{3\},\{4\}\a v),s_{3}=(\{3,4\},\{5\}\a u),s_{4}=(\{1,2\},\{3,4,5\}\a x).\]
Then $s_{1}$ and $s_{3}$ are independent but $s_{4}$ depends on
$s_{1}$.\end{example}
\begin{prop}
For $n\ge3$ we have $\rho(G_{n})>\left\lfloor \sqrt{2n-1}\right\rfloor +1$.\end{prop}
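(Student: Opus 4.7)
The plan is to prove the bound by exhibiting, for each $n \ge 3$, a pebble distribution on $G_n$ of size exactly $k + 1$, where $k = \lfloor \sqrt{2n-1} \rfloor$ is the number of rows of $G_n$, together with a goal vertex $x$ not reachable from this distribution. Figures~\ref{fig:Gn} and~\ref{fig:Gn2} display precisely such distributions for $n \in \{3,\ldots,18\}$, and these examples should guide a uniform construction. Guided by the pictures, I would place essentially one pebble on each row of $G_n$, at a vertex lying outside $R_{i_0} \cup C_{j_0}$, plus one extra pebble positioned so that it cannot cooperate with any other pebble to deposit a pebble on a neighbor of $x$.

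To verify unreachability, I would argue by contradiction, using the dependency-distribution framework just introduced. Assume $x$ is reachable. By the No Cycle Lemma there is an acyclic rubbling sequence $s$ with $p_s(x) \ge 1$; executing the corresponding moves on the associated dependency distribution produces a dependency set $A$ with $l(A) = x$, and $A$ is the disjoint union of the initial singleton dependency sets of all the pebbles it uses. Since $\mathrm{diam}(G_n) = 2$, the pebble landing at $x$ arrived from a neighbor of $x$, namely from some vertex in $R_{i_0} \cup C_{j_0}$. Tracing back along the dependency tree, the two disjoint dependency sets combined in the final move at $x$ must each either already lie at a neighbor of $x$ or be the result of an earlier move at such a neighbor.

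The core step is to show that under the chosen distribution no pair of disjoint dependency sets can ever be brought together at a common neighbor of $x$ without exceeding the pebble budget. The one-pebble-per-row placement forces any two pebbles that end up on the same row $R_j$ (with $j \ne i_0$) to originate from at least two distinct initial pebbles, and similarly for each partial column $C_i'$. Combining this with the row/column counting used in the preceding proposition, I would show that any rubbling sequence creating a pebble inside $R_{i_0} \cup C_{j_0}$ would in fact need at least $k+2$ initial pebbles, contradicting $|p| = k+1$. The main obstacle will be the technical case analysis: the spine-edge modifications that produce $H_s'$ and the irregular placement of the added $(0,j)$-vertices create several boundary subcases (different parities of $s$, and values of $n$ strictly between $|V(H_s')|$ and $|V(H_{s+1}')|$) which must be treated separately, and one must also verify in each case that the location chosen for the single extra pebble cannot be absorbed into any productive rubbling sequence toward $x$.
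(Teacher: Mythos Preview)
Your plan is in the right spirit---exhibit a size-$(k+1)$ distribution and argue unreachability through dependency sets---but the unreachability argument is missing its central mechanism, and the vague ``pebble-budget'' counting you describe will not close the gap.

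The paper uses a very specific distribution: one pebble on each spine vertex $(i,i)$ for $2\le i\le k-1$ and \emph{three} pebbles concentrated at $(k,k)$, with goal $x=(1,1)$.  The decisive structural idea, which your sketch does not contain, is the notion of a \emph{line} $L_i=R_i\cup C_i$ together with the invariant that if every line carries at most one pebble, then no rubbling move can ever create two pebbles on the same line.  The proof runs as follows: reaching $x$ forces two \emph{independent} horizontal moves (moves entirely inside one row); since the two moves are independent, at least one of them depends on at most one of the three pebbles at $(k,k)$; delete the other two pebbles at $(k,k)$ together with every move depending on them.  What remains is an executable sequence from a one-pebble-per-line configuration that still contains a horizontal move---impossible, because a horizontal move requires two pebbles on a single row, and the invariant forbids that.

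Your ``any rubbling sequence creating a pebble inside $R_{i_0}\cup C_{j_0}$ would need at least $k+2$ initial pebbles'' is precisely the statement that has to be proved, and a direct count in the style of the preceding proposition (which established an \emph{upper} bound under the opposite hypothesis) does not suffice: rubbling can shuffle pebbles between rows and columns in complicated ways, and without the line invariant there is no obstruction to track.  Finally, the parity and boundary case analysis you anticipate for the spine edges and the added $(0,j)$ vertices is unnecessary; with pebbles on the spine and goal $(1,1)$, the paper's argument is uniform in $n$.
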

\begin{proof}
Let $k=\left\lfloor \sqrt{2n-1}\right\rfloor $ be the largest column
index in $G_{n}$. We show that the goal vertex $x:=(1,1)$ is not
reachable from the pebble distribution that has a single pebble on
vertex $(i,i)$ for $1<i<k$ and three pebbles on vertex $(k,k)$
as shown in Figures~\ref{fig:Gn} and \ref{fig:Gn2}. To see this,
we show that $x$ is not reachable from the dependency distribution
$\mathcal{P}=\{\{1\},\ldots,\{k+1\}\}$ with \[
l(\{1\},\ldots,\{k-2\},\{k-1\},\{k\},\{k+1\})=((2,2),\ldots,(k-1,k-1),(k,k),(k,k),(k,k)).\]
For a contradiction suppose that $x$ is reachable from $\mathcal{P}$,
that is, there is a sequence $s_{1},\ldots,s_{m}$ of rubbling moves
that creates a dependency set at $x$.

Let us call a rubbling move $(A,B\a u)$ horizontal if $l(A)$, $l(B)$
and $u$ are all contained in the same row of $G_{n}$. To reach the
goal vertex $x$, we must use a rubbling move involving vertices on
$C_{1}\cup C_{0}$. There are no pebbles on these vertices originally
and the only way to move a new pebble there is to use a horizontal
rubbling move. So there must be at least two independent horizontal
moves $s_{i}=(A,B\a u)$ and $s_{j}=(C,D\a w)$ in our rubbling sequence.
We show that this is not possible.

Since $s_{i}$ and $s_{j}$ are independent, at least one of the sets
$A\cup B$ and $C\cup D$ contains at most one element of $\{k-1,k,k+1\}$.
Roughly speaking, this means that both $s_{i}$ and $s_{j}$ cannot
rely on more than one pebble available at vertex $(k,k)$ in $\mathcal{P}$
since there are only three pebbles there. So we can assume that $(A\cup B)\cap\{k,k+1\}=\emptyset$. 

Now we create a new dependency distribution $\tilde{\mathcal{P}}=\mathcal{P}\setminus\{\{k\},\{k+1\}\}$
by removing the two pebbles from $\mathcal{P}$ that $s_{i}$ does
not rely on for sure. We also remove the rubbling moves from $(s_{1},\ldots,s_{m})$
that are dependent on $k$ or $k+1$. More precisely, we remove the
rubbling moves of the form $(K,L\a v)$ for which $(K\cup L)\cap\{k,k+1\}\not=\emptyset$.
The resulting rubbling sequence $(\tilde{s}_{1},\ldots,\tilde{s}_{\tilde{m}})$
is executable from $\tilde{\mathcal{P}}$ and contains $s_{i}$.

Let us call $L_{i}=R_{i}\cup C_{i}$ for $i\ge1$ a \emph{line} of
$G_{n}$. Line $L_{i}$ contains the spine vertex $(i,i)$. Note that
all vertices on the spine are contained in exactly one line, and all
other vertices are contained in at most two lines. We say that a pebble
configuration is \emph{forbidden} if there is a line with more than
one pebble. Note that two pebbles on a row $R_{i}$ or on a column
$C_{i}$ with $i\ge1$ is a forbidden configuration. It is clear that
$\tilde{\mathcal{P}}$ is not a forbidden pebble configurations. 

\begin{figure}
\input{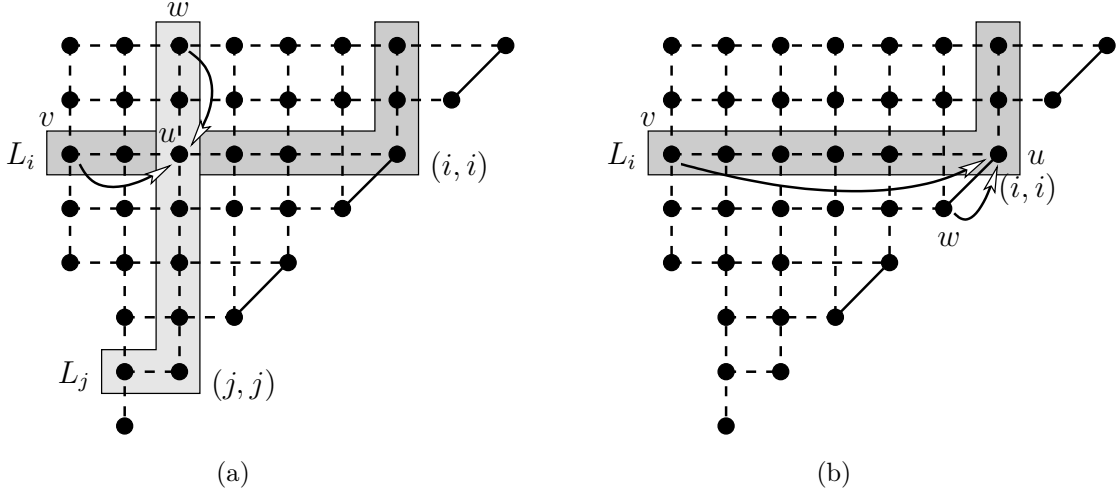}

\caption{\label{fig:linei-1}Possible ways to move a pebble to the line $L_{i}$.}

\end{figure}

We are going to show that a rubbling move cannot create a forbidden
configuration if there was no forbidden configuration before the rubbling
move. Suppose that a rubbling move $(A,B\a u)$ creates a pebble on
vertex $u$ of line $L_{i}$. If $u$ is not on the spine, then it
is contained in another line $L_{j}$, so all of its neighbors are
in $L_{i}\cup L_{j}$. Both $l(A)$ and $l(B)$ cannot be on the same
line since there was no forbidden configuration before this step.
Thus one of $l(A)$ and $l(B)$ must be on $L_{i}$ while the other
must be outside of $L_{i}$ as shown on Figure~\ref{fig:linei-1}(a).
If $u$ is on the spine, then it has a neighbor which is on the spine,
too. Again, both of $l(A)$ and $l(B)$ cannot be on this vertex,
because that is a forbidden configuration. Therefore one of $l(A)$
and $l(B)$ must be on $L_{i}$ again as shown on Figure~\ref{fig:linei-1}(b)
since in this case all other neighbors of $u$ are in $L_{i}$. 

Thus we can assume that $l(A)=v$ is in $L_{i}$. Since $v$ is in
$L_{i}$, we cannot have any other pebble on $L_{i}$ before the rubbling
move. So $u$ is the only pebble on $L_{i}$ after the rubbling move
and so the rubbling move did not create any forbidden configurations.

We saw that $(\tilde{s}_{1},\ldots,\tilde{s}_{\tilde{m}})$ has the
horizontal move $s_{i}$. A horizontal move is only possible if there
are two pebbles on a row which is a forbidden configuration. This
is a contradiction since we do not have any forbidden configurations
during the execution of $(\tilde{s}_{1},\ldots,\tilde{s}_{\tilde{m}})$.\end{proof}
\begin{cor}
For $n\ge3$ we have $\rho(G_{n})=\left\lfloor \sqrt{2n-1}\right\rfloor +2$.
\end{cor}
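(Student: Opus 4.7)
The corollary is essentially a sandwich of the two propositions that immediately precede it in the excerpt. The plan is simply to observe that the rubbling number is integer-valued, so combining the strict lower bound $\rho(G_n) > \lfloor \sqrt{2n-1}\rfloor + 1$ with the upper bound $\rho(G_n) \le \lfloor \sqrt{2n-1}\rfloor + 2$ pins down the value exactly.

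More concretely, first I would cite the preceding proposition giving the upper bound $\rho(G_n) \le \lfloor \sqrt{2n-1}\rfloor + 2$. Then I would cite the second preceding proposition giving $\rho(G_n) > \lfloor \sqrt{2n-1}\rfloor + 1$. Since $\rho(G_n) \in \mathbb{Z}$, the strict inequality upgrades to $\rho(G_n) \ge \lfloor \sqrt{2n-1}\rfloor + 2$, and the two bounds must coincide.

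There is no real obstacle here, as all the work has already been done in establishing the matching bounds: the upper bound was proved by a case analysis showing that any distribution of $\lfloor\sqrt{2n-1}\rfloor + 2$ pebbles reaches every vertex, and the lower bound was proved by exhibiting the explicit unreachable distribution with one pebble on each spine vertex $(i,i)$ for $1 < i < k$ and three pebbles on $(k,k)$. The only thing to verify, which is trivial, is that integrality of $\rho$ lets us pass from strict inequality to the sharp equality.
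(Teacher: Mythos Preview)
Your proposal is correct and matches the paper's approach exactly: the paper states the corollary without proof, treating it as immediate from the two preceding propositions, and your argument spells out precisely that combination together with the integrality observation.
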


\section{Upper bound for $f(n,2)$}

\begin{table}
\begin{tabular}{|c|c|c|c|c|c|c|c|c|}
\hline 
$n$ & 3 & 4 & 5 & 6 & 7 & 8 & 9 & 10\tabularnewline
\hline 
$f(n,2)$ & $4$ & $4$ & $5$ & $5$ & $5$ & $5$ & $6$ & ?\tabularnewline
\hline 
$\rho(G_{n})$ & $4$ & $4$ & $5$ & $5$ & $5$ & $5$ & $6$ & $6$\tabularnewline
\hline
\end{tabular}

~

~

\caption{\label{cap:Maximum-rubbling-numbers}Rubbling numbers of $G_{n}$
and all known maximum rubbling numbers for diameter 2 graphs with
$n$ vertices. }

\end{table}

Table~\ref{cap:Maximum-rubbling-numbers} shows the maximum rubbling
numbers \[
f(n,2)=\max\{\rho(G)\mid n=|V(G)|\text{ and }2=\text{diam}(G)\}\]
of diameter 2 graphs with $n$ vertices. The values were calculated
by a computer program \cite{pebbleAlgo}. The program checked all
diameter 2 graphs with a given number of vertices. These graphs were
generated by Nauty \cite{nauty}. We have $f(n,2)=\rho(G_{n})$ for
$n\in\{3,\ldots,9\}$. It is not clear whether this is true for all
$n$.
\begin{problem}
Is it true that $f(n,2)=\rho(G_{n})$ for all $n\ge3$?
\end{problem}
There are more existing results for similar questions about pebbling.
It is known \cite{Pachter} that $f(n,2)=n+1$ since the pebbling
number of a diameter 2 graph is either $n$ or $n+1$. A classification
of diameter 2 graphs with pebbling number $n+1$ is also known from
\cite{diamtwo}. Diameter 3 graphs are also studied. In \cite{diamthree},
it is shown that $f(n,3)=\frac{3}{2}n+O(1)$. 

The proof of the following result uses the method of \cite{Hall}.
\begin{lem}
\label{lem:hyper}Let $\mathcal{H}$ be a 3-uniform hypergraph on
$q$ vertices. If $|E\cap F|\not=1$ for all $E,F\in\mathcal{H}$,
then $|\mathcal{H}|\le q$.\end{lem}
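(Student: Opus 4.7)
The plan is to classify the connected components of $\mathcal{H}$ under the relation $E\sim F$ iff $|E\cap F|=2$, and then count edges component by component. I would first observe that if two edges $E,F$ lie in different components, then they must be vertex-disjoint: if they shared a vertex, then $|E\cap F|\ge 1$; since $|E\cap F|=1$ is forbidden, $|E\cap F|\ge 2$, which would place them in the same component. Consequently the vertex sets spanned by distinct components are disjoint, and it suffices to show that within each component the number of edges is at most the number of vertices it spans.

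Next I would analyze a single component with at least two edges. Fix $E_{1}=\{a,b,c\}$ and $E_{2}=\{a,b,d\}$ in the component and consider any further edge $E_{3}$. A short case analysis on which $2$-subset of $E_{1}$ is contained in $E_{3}$, together with the constraint $|E_{3}\cap E_{2}|\ne 1$, forces either $E_{3}\supseteq\{a,b\}$ (extending a sunflower through $\{a,b\}$), or $E_{3}\in\{\{a,c,d\},\{b,c,d\}\}$. If the latter occurs, any additional edge $E_{4}$ in the component must be the remaining $3$-subset of $\{a,b,c,d\}$, because an edge involving a new vertex would intersect one of $E_{1},E_{2},E_{3}$ in exactly one element. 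Moreover, once a sunflower already has $\ge 3$ petals, no tetrahedral edge can be adjoined without a bad intersection. Thus every component is either a \emph{sunflower} $\{a,b,c_{1}\},\ldots,\{a,b,c_{k}\}$ (with a single edge being the degenerate case $k=1$) or a sub-hypergraph of the tetrahedron $K_{4}^{(3)}$.

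Finally I would count. A sunflower with $k$ petals spans $k+2$ vertices and has $k\le k+2$ edges; a sub-tetrahedron has at most $4$ edges on $4$ vertices. In each case the edge count of a component is at most the number of vertices it spans, so summing over the vertex-disjoint components yields $|\mathcal{H}|\le q$.

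The main obstacle will be the component classification, particularly ruling out any ``hybrid'' component that mixes sunflower and tetrahedral structure: one must verify carefully that adding even a single tetrahedral edge to a sunflower with three or more petals, or any new edge to a tetrahedron, inevitably produces an intersection of size exactly $1$ with an earlier edge.
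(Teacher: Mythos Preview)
Your combinatorial classification is correct and complete (the one point you left implicit---that every edge of a $\sim$-component, not just a $\sim$-neighbor, meets $E_{1}$ in two vertices---follows from a short path argument: if $E$ were disjoint from $E_{1}$, walk along a $\sim$-path from $E_{1}$ to $E$ and find consecutive edges $F_{i},F_{i+1}$ with $F_{i}\cap E=\emptyset$, $|F_{i+1}\cap E|=2$, $|F_{i}\cap F_{i+1}|=2$, forcing $|F_{i+1}|\ge 4$). With that in hand, your sunflower/tetrahedron dichotomy and the final count go through exactly as you outline.

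The paper, however, takes a completely different and much shorter route: it is a two-line linear-algebra argument over $GF(2)$ in the style of the Oddtown theorem. One checks that the characteristic vectors $\mathbf{v}_{E}\in GF(2)^{q}$ of the edges satisfy $\mathbf{v}_{E}\cdot\mathbf{v}_{E}=3\equiv 1$ and $\mathbf{v}_{E}\cdot\mathbf{v}_{F}=|E\cap F|\in\{0,2\}\equiv 0$ for $E\neq F$; hence the $\mathbf{v}_{E}$ are linearly independent and $|\mathcal{H}|\le q$. Your approach is more laborious but yields an explicit structure theorem for the extremal (and near-extremal) configurations, which the algebraic proof does not; the paper's approach is dimension-free in the sense that it generalizes immediately to $k$-uniform hypergraphs with odd $k$ and even pairwise intersections.
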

\begin{proof}
Let $\mathbf{v}_{1},\ldots,\mathbf{v}_{n}$ denote the characteristic
vectors of the sets in $\mathcal{H}$. We claim that the characteristic
vectors are linearly independent over $GF(2)$. This clearly implies
the result. 

Since every set contains $3$ elements, we have $\mathbf{v}_{i}^{2}=1$
for all $i$ since $3\equiv_{2}1$. On the other hand, if $i\not=j$
then $\mathbf{v}_{i}\cdot\mathbf{v}_{j}=0$ since the product is the
cardinality of the intersection of the two corresponding sets. If
$\sum_{i}^{n}c_{i}\mathbf{v}_{i}=\mathbf{0}$ then multiplying by
$\mathbf{v}_{j}$ we obtain $c_{j}=c_{j}\mathbf{v}_{j}^{2}=\mathbf{0}\cdot\mathbf{v}_{j}=0$.
This proves our claim.
\end{proof}
The set of vertices adjacent to a given vertex $v$ of a graph is
denoted by $N(v)$.
\begin{prop}
\label{pro:nofive}Let $G$ be a diameter two graph with goal vertex
$x$. Let $p$ be a pebble distribution on $G$ containing $m$ pebbles.
If $\{x\}\cup N(x)$ is not reachable using only five pebbles of $p$,
then $G$ has at least $\left\lfloor \frac{m^{2}+3}{2}\right\rfloor $
vertices.
\end{prop}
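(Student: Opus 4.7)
I would first pin down what $p$ must look like. Set $D_2 := V(G) \setminus (\{x\} \cup N(x))$. Since one pebble on $\{x\} \cup N(x)$ already reaches it, and two pebbles stacked at any $v \in D_2$ pebble-move to a neighbor in $N(x)$ (nonempty because $\operatorname{dist}(v,x)=2$), the $m$ pebbles must sit one per vertex on distinct vertices $v_1, \ldots, v_m \in D_2$. Writing $T_i := N(v_i) \cap N(x)$, each $T_i$ is nonempty by diameter two, and for any $a \in T_i \cap T_j$ the strict rubbling move $(v_i, v_j \to a)$ would reach $N(x)$; hence the $T_i$'s are pairwise disjoint and $|N(x)| \geq m$.

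I would then extract finer restrictions from three-, four-, and five-pebble non-reachability to restrict $D_2$. Three pebbles rule out any pebbled vertex being a common neighbor of two other pebbled vertices, and force $N(u) \cap N(x)$ to be disjoint from $T_k$ whenever $u \in D_2 \setminus \{v_1,\ldots,v_m\}$ is a common neighbor of pebbled $v_i, v_j$ and $k \neq i, j$. Four pebbles — two parallel strict rubblings converging at the same $u \in D_2$, followed by a pebbling move into $N(x)$ — give $|N(u) \cap \{v_1, \ldots, v_m\}| \leq 3$ for every $u \in D_2 \setminus \{v_1,\ldots,v_m\}$. Five pebbles yield further constraints via longer chains (for instance a strict rubbling to $u \in D_2$, a follow-up strict rubbling $(u, v_k \to w)$, and a final combination with a parallel rubbling from two more pebbles).

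The third step packages these constraints as a $3$-uniform hypergraph $\mathcal{H}$ on $V(G)$ whose pairwise edge intersections avoid size $1$, after which Lemma~\ref{lem:hyper} yields $|V(G)| \geq |\mathcal{H}|$. Using the identity $\lfloor (m^2+3)/2 \rfloor = \binom{m}{2} + \lceil m/2 \rceil + 1$, I would aim for $\binom{m}{2}$ ``pair edges'' of the form $\{v_i, v_j, u_{ij}\}$ (with $u_{ij} \in D_2 \setminus \{v_1,\ldots,v_m\}$ a common neighbor selected via the Step~2 restrictions) together with $\lceil m/2 \rceil + 1$ auxiliary edges assembled from $x$, representatives $a_i \in T_i$, and the induced matching on the pebbled vertices (which, by the three-pebble restriction, has maximum degree $1$).

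The main obstacle is the intersection analysis. Disjoint index-pairs $\{i,j\}$ and $\{k,l\}$ cause no trouble, because $a_i, a_j, a_k, a_l$ lie in pairwise disjoint $T$'s while the $u$'s lie in $D_2$, and the four-pebble constraint forces $u_{ij} \neq u_{kl}$. The delicate case is overlapping pairs $\{i,j\}$ and $\{i,k\}$, whose natural intersection collapses to $\{v_i\}$ unless one either arranges $u_{ij} = u_{ik}$ — a choice heavily constrained by $|N(u) \cap P| \leq 3$ — or forces the auxiliary edges to contribute a second shared vertex. Threading the five-pebble restrictions to make a coherent global selection of the $u_{ij}$'s and auxiliary edges so that every cross-intersection has size $0$, $2$, or $3$ while the edge count reaches $\lfloor(m^2+3)/2\rfloor$ is the combinatorial heart of the argument; without the five-pebble hypothesis, the naive $|N(u)\cap P|\leq 3$ bound already fails to produce more than roughly $m^2/6$ distinct common neighbors, which is quantitatively too weak.
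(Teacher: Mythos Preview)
Your setup in the first two steps is essentially correct and matches the paper: the pebbles sit on distinct vertices $b_1,\dots,b_m$ at distance~$2$ from $x$, each $b_i$ picks out a distinct $a_i\in N(x)$, and the three-, four-, and five-pebble constraints you list are the right levers.

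The gap is in Step~3. Building a hypergraph on $V(G)$ with edges $\{v_i,v_j,u_{ij}\}$ cannot satisfy the intersection condition of Lemma~\ref{lem:hyper}: for overlapping index pairs $\{i,j\}$ and $\{i,k\}$ you would need $u_{ij}=u_{ik}$, and propagating this through all pairs containing $i$ forces a single vertex adjacent to every other $v_j$, contradicting your own bound $|N(u)\cap\{v_1,\dots,v_m\}|\le 3$ once $m\ge 5$. You flag this as ``the combinatorial heart,'' but it is not a detail to be threaded later --- the construction as stated is impossible, and no amount of five-pebble restrictions will rescue it, because the obstruction is purely combinatorial in the index set.

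The paper's proof uses Lemma~\ref{lem:hyper} in a completely different way. It does \emph{not} try to encode $V(G)$ as a hypergraph. Instead it counts vertices directly as
\[
|V(G)|\ \ge\ 1 + |A| + |B| + |C| + |D|,
\]
where $A=\{a_1,\dots,a_m\}\subseteq N(x)$, $B=\{b_1,\dots,b_m\}$ are the pebbled vertices, $C=\{c_{\{i,j\}}\}$ are chosen common neighbours of non-adjacent pairs $b_i,b_j$, and $D$ is an auxiliary set of \emph{further} neighbours of $x$ needed to cover the ``bad'' $c$'s. The hypergraph lemma is applied on the index set $\{1,\dots,m\}$, to the family $\mathcal{K}=\{\{i,j,k\}: c_{\{i,j\}}=c_{\{i,k\}}\}$ of coincidences; the five-pebble hypothesis shows that two such triples cannot meet in exactly one index, so $|\mathcal{K}|\le m$. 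This bounds the defect $|C'|$ in the count of $|C|$, and the arithmetic then gives $\lfloor(m^2+3)/2\rfloor$. In short: the lemma controls how many common neighbours can collide, rather than manufacturing the vertices themselves.
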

\begin{figure}
~\input{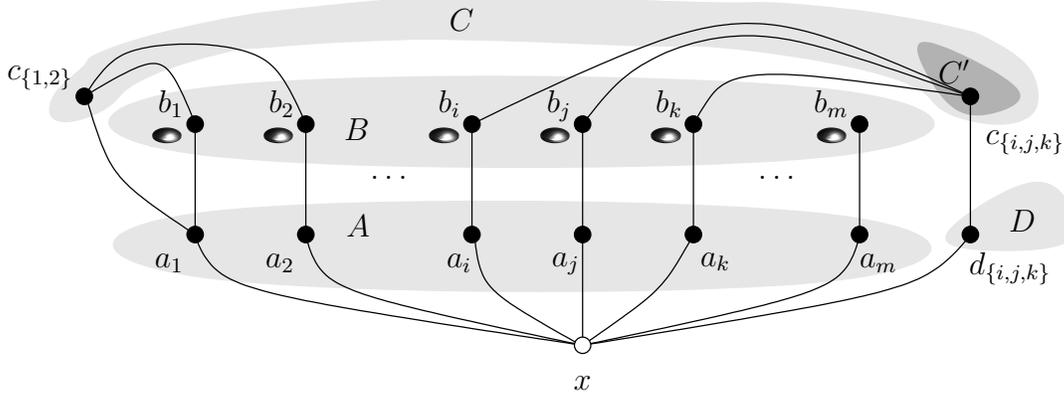}

\caption{\label{cap:nofive}The sets $A,B,C,C'$ and $D$ in the proof of Proposition
\ref{pro:nofive}.}

\end{figure}

\begin{proof}
For $v\in N(x)$ let $R(v)=N(v)\cup\{v\}\setminus\{x\}$. Since the
diameter of $G$ is two, we must have\[
V(G)\setminus\{x\}=\bigcup_{v\in N(x)}R(v).\]
Since $\{x\}\cup N(x)$ is not reachable using only five pebbles,
no $R(v)$ can contain more than two pebbles otherwise we could move
a pebble to $v\in N(x)$. So there is a maximal subset $A=\{a_{1},\ldots,a_{m}\}$
of $N(x)$ such that $R(a_{i})$ contains exactly one pebble on a
vertex $b_{i}\in R(a_{i})$ for all $i$. Define $B=\{b_{1},\ldots,b_{m}\}$
and note that $A$ and $B$ are clearly disjoint. To simplify notation
we write $R_{i}$ for $R(a_{i})$. Figure~\ref{cap:nofive} shows
a visualization of these sets.

If two vertices $b_{i}$ and $b_{j}$ of $B$ are not adjacent, then
the diameter condition implies that we can pick a common neighbor
$c_{\{i,j\}}$ of $b_{i}$ and $b_{j}$. Note that $c_{\{i,j\}}=c_{\{j,i\}}$.
If $i$, $j$, $k$ and $l$ are all different, then $c_{\{i,j\}}\not=c_{\{k,l\}}$,
otherwise $(b_{i},b_{j}\a c_{\left\{ i,j\right\} })(b_{k},b_{l}\a c_{\left\{ i,j\right\} })$
would move two pebbles to $c_{\{i,j\}}$ from which $N(x)$ is reachable
using only four pebbles. 

If $c_{\{i,j\}}=c_{\{i,k\}}$ then we write $c_{\{i,j,k\}}$ for $c_{\{i,j\}}=c_{\{i,k\}}$.
Define\[
\mathcal{J}=\{\{i,j\}\mid\{b_{i},b_{j}\}\in E(\overline{G})\},\quad C=\{c_{\{i,j\}}\mid\{i,j\}\in\mathcal{J}\}.\]
A vertex $b_{i}$ cannot be adjacent to two different vertices $b_{j}$
and $b_{k}$ of $B$, otherwise $(b_{j},b_{k}\a b_{i})$ would create
two pebbles on $b_{i}$, making $a_{i}\in N(x)$ reachable using only
three pebbles. Hence the number of edges between the elements of $B$
is at most $\left\lfloor \frac{m}{2}\right\rfloor $ and so $|\mathcal{J}|\ge{m \choose 2}-\left\lfloor \frac{m}{2}\right\rfloor $.

We also introduce a subset $C'$ of $C$ by letting \[
\mathcal{K}=\{\{i,j,k\}\mid c_{\{i,j\}}=c_{\{i,k\}}\},\quad C'=\{c_{\{i,j,k\}}|\{i,j,k\}\in\mathcal{K}\}.\]
Though it may happen that $c_{\{i,j\}}=c_{\{i,k\}}=c_{\{j,k\}}$,
we have $|C\setminus C'|\ge\mathcal{J}-3|C'|$. Note that $c_{\{i,j\}}\in C\setminus C'$
and $\{i,j\}\not=\{k,l\}$ implies $c_{\{i,j\}}\not=c_{\{k,l\}}$.
If $c_{\{i,j,k\}}=c_{\{i',j',k'\}}$ then $\{i,j,k\}=\{i',j',k'\}$
otherwise we could move two pebbles to $c_{\{i,j,k\}}$ using only
four pebbles. Hence $|\mathcal{K}|=|C'|$.

We can clearly move a pebble to any element of $C$ and so $C\cap A\subseteq C\cap N(x)=\emptyset$.
If $k\not\in\{i,j\}$ then $c_{\{i,j\}}\not\in R_{k}$ otherwise $(b_{i},b_{j}\a c_{\{i,j\}})(b_{k},c_{\{i,j\}}\a a_{k})$
would move a pebble to $a_{k}\in N(x)$ using only three pebbles.
In particular, $c_{\{i,j\}}\not=b_{k}$ and so $C\cap B=\emptyset$. 

Suppose that $c_{\{i,j,k\}}\in C'$. Then $c_{\{i,j,k\}}\not\in R_{l}$
for $l\not\in\{i,j,k\}$ since $c_{\{i,j,k\}}$ is in $\{c_{\{i,j\}},c_{\{i,k\}},c_{\{j,k\}}\}$.
We also have $c_{\{i,j,k\}}\not\in R_{i}$, otherwise $(b_{j},b_{k}\a c_{\{i,j,k\}})(b_{i},c_{\{i,j,k\}}\a a_{i})$
would move a pebble to $a_{i}\in N(x)$ using only three pebbles.
Similar arguments show that $c_{\{i,j,k\}}\not\in R_{j}\cup R_{k}$.
Hence\[
c_{\{i,j,k\}}\in R(d_{\{i,j,k\}})\setminus\bigcup_{l=1}^{m}R_{l}\quad\text{for some}\quad d_{\{i,j,k\}}\in N(x)\setminus A.\]

Let $D=\{d_{\{i,j,k\}}\mid\{i,j,k\}\in\mathcal{K}\}$. Then $D$ is
disjoint from $A\cup B$ by definition. We also have $D\cap C\subseteq N(x)\cap C=\emptyset$.
If $d_{\{i,j,k\}}=d_{\{i',j',k'\}}$ then $\{i,j,k\}=\{i',j',k'\}$,
otherwise $\{b_{i},b_{j},b_{k},b_{i'},b_{j'},b_{k'}\}$ would have
at least four vertices with pebbles so we could move a pebble to $d_{\{i,j,k\}}\in N(x)$
using only these four pebbles. So $D$ and $C'$ has the same number
of elements.

The intersection of two elements $\{i,j,k\}$ and $\{i,j',k'\}$ of
$\mathcal{K}$ cannot be a singleton set$\left\{ i\right\} $, otherwise
\[
(b_{j},b_{k}\a c_{\{j,k\}})(b_{j'},b_{k'}\a c_{\{j',k'\}})(c_{\{j,k\}},c_{\{j',k'\}}\a b_{i})\]
 would move two pebbles to $b_{i}$ from where $\{x\}\cup N(x)$ would
be reachable using only five pebbles. Hence the 3-uniform hypergraph
$\mathcal{K}$ satisfies the conditions of Lemma~\ref{lem:hyper}
and so $|\mathcal{K}|\le m$.

The result now follows from the calculation\begin{align*}
|V_{G}| & \ge|\{x\}|+|A|+|B|+|C|+|D|=1+m+m+|C\setminus C'|+|C'|+|D|\\
 & \ge1+2m+|\mathcal{J}|-3|C'|+|C'|+|C'|\ge1+2m+{m \choose 2}-\left\lfloor \frac{m}{2}\right\rfloor -|C'|\\
 & =\frac{2+3m+m^{2}}{2}-\left\lfloor \frac{m}{2}\right\rfloor -|\mathcal{K}|\ge\frac{2+3m+m^{2}}{2}-\left\lfloor \frac{m}{2}\right\rfloor -m=\left\lfloor \frac{m^{2}+3}{2}\right\rfloor .\end{align*}
\end{proof}
\begin{prop}
\label{pro:mton}Let $G$ be a diameter two graph with goal vertex
$x$. Let $p$ be a pebble distribution on $G$ containing $m\ge5$
pebbles. If $x$ is not reachable from $p$ then $G$ has at least
$\left\lfloor \frac{(m-5)^{2}+3}{2}\right\rfloor $ vertices.\end{prop}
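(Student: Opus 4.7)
The plan is to apply Proposition~\ref{pro:nofive} to a distribution obtained by discarding at most five pebbles of $p$. The function $k\mapsto\lfloor(k^{2}+3)/2\rfloor$ is non-decreasing on $k\ge 0$, so it will suffice to exhibit a sub-distribution $q\le p$ with $|q|\ge m-5$ to which Proposition~\ref{pro:nofive} applies. If $\{x\}\cup N(x)$ is not reachable from any five (or fewer) pebbles of $p$ itself, then Proposition~\ref{pro:nofive} already gives $|V(G)|\ge\lfloor(m^{2}+3)/2\rfloor$, which dominates the target bound.

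Otherwise, fix a distribution $P_{1}\le p$ with $|P_{1}|\le 5$ together with a rubbling sequence from $P_{1}$ reaching some $v_{1}\in\{x\}\cup N(x)$. Since $x$ is not reachable from $p$, and a fortiori not from $P_{1}$, necessarily $v_{1}\in N(x)$. Set $p'=p-P_{1}$, so $|p'|\ge m-5\ge 0$. The plan now is to verify that $\{x\}\cup N(x)$ is not reachable using any five or fewer pebbles of $p'$; granted this, Proposition~\ref{pro:nofive} applied to $p'$ yields $|V(G)|\ge\lfloor(|p'|^{2}+3)/2\rfloor\ge\lfloor((m-5)^{2}+3)/2\rfloor$ by the monotonicity above.

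Suppose for contradiction some $P_{2}\le p'$ with $|P_{2}|\le 5$ admits a rubbling sequence reaching $v_{2}\in N(x)$ (again $v_{2}\ne x$). By the No Cycle Lemma there are multisets $S_{1},S_{2}$ of rubbling moves balanced with $P_{1},P_{2}$ respectively, with $(P_{1})_{S_{1}}(v_{1})\ge 1$ and $(P_{2})_{S_{2}}(v_{2})\ge 1$. Because rubbling moves act additively on pebble counts, for every vertex $v$ one has $(P_{1}+P_{2})_{S_{1}\cup S_{2}}(v)=(P_{1})_{S_{1}}(v)+(P_{2})_{S_{2}}(v)\ge 0$, and since $P_{1}+P_{2}\le p$, the multiset $S_{1}\cup S_{2}$ is balanced with $p$ and satisfies $p_{S_{1}\cup S_{2}}(v_{i})\ge 1$ for $i=1,2$. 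Appending one further move to $x$ — a pebbling move $(v_{1},v_{1}\a x)$ if $v_{1}=v_{2}$, or the strict rubbling move $(v_{1},v_{2}\a x)$ otherwise — produces a multiset balanced with $p$ placing a pebble at $x$, so the No Cycle Lemma makes $x$ reachable from $p$, contradicting the hypothesis. The hardest part is this combining step: the disjointness $P_{1}+P_{2}\le p$ is exactly what keeps $S_{1}\cup S_{2}$ balanced with $p$ and simultaneously populates both $v_{1}$ and $v_{2}$, which is the reason $P_{1}$ has to be peeled off before we go looking for $P_{2}$.
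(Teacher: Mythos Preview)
Your proof is correct and follows essentially the same approach as the paper's: split into the case where Proposition~\ref{pro:nofive} applies directly, and otherwise peel off at most five pebbles witnessing reachability of $\{x\}\cup N(x)$, then apply Proposition~\ref{pro:nofive} to the remainder. You supply more detail than the paper does, in particular invoking the No~Cycle Lemma and the additivity $(P_{1}+P_{2})_{S_{1}\cup S_{2}}=(P_{1})_{S_{1}}+(P_{2})_{S_{2}}$ to justify rigorously why two disjoint five-pebble witnesses can be combined to reach $x$; the paper leaves this step implicit.
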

\begin{proof}
If $p$ satisfies the conditions of Proposition~\ref{pro:nofive},
then $G$ must have at least $\left\lfloor \frac{m^{2}+3}{2}\right\rfloor $
vertices. 

Otherwise there are five pebbles of $p$ from where $\{x\}\cup N(x)$
is reachable. We can remove these pebbles to create a new pebble distribution
$q$ containing $m-5$ pebbles. Then $q$ must satisfy the conditions
of Proposition~\ref{pro:nofive}, otherwise we could move two pebbles
to $\{x\}\cup N(x)$ and so $x$ would be reachable. So $G$ must
have at least $\left\lfloor \frac{(m-5)^{2}+3}{2}\right\rfloor $
vertices.\end{proof}
\begin{prop}
The rubbling number of a diameter 2 graph with $n$ vertices cannot
be larger than $\sqrt{2n-1}+5$.\end{prop}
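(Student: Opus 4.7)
The plan is to contrapose Proposition~\ref{pro:mton}. Let $r := \rho(G)$; we may assume $r \geq 6$, since otherwise $r \leq 5 \leq \sqrt{2n-1}+5$ for every $n \geq 1$ (and in particular for any diameter-$2$ graph, which forces $n \geq 3$). By the minimality built into the definition of $\rho(G)$, there exists a pebble distribution $p$ of size $m := r - 1 \geq 5$ on $G$ and a vertex $x \in V(G)$ such that $x$ is not reachable from $p$. This is exactly the hypothesis of Proposition~\ref{pro:mton}, which then yields
\[
n \;\geq\; \left\lfloor \frac{(r-6)^{2}+3}{2} \right\rfloor.
\]

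The remaining task is to solve this inequality for $r$. Dropping the floor gives $2n \geq (r-6)^{2} + 2$, and then elementary manipulation converts this into $(r-5)^{2} \leq 2n-1$, which rearranges to $r \leq \sqrt{2n-1}+5$. I would carry this out by a short case analysis on the parity of $r-6$ (which governs whether $(r-6)^{2}+3$ is even or odd, and hence controls what the floor strips off), exploiting the integrality of $r$ and $n$ to absorb the extra half-unit.

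The main obstacle, in my view, is exactly this final algebraic step: the bound from Proposition~\ref{pro:mton} naturally delivers only the slightly weaker $r \leq \sqrt{2n-1}+6$, so the subtle part is to tighten by one unit. If the parity argument above is insufficient, the backup plan is to go back into the proof of Proposition~\ref{pro:mton} and use its \emph{stronger} conclusion $n \geq \lfloor (m^{2}+3)/2 \rfloor$ (from Proposition~\ref{pro:nofive}) in the case that $\{x\}\cup N(x)$ is not reachable from any five pebbles of $p$; that case gives a much larger lower bound on $n$ and easily implies the claim, while the complementary case—where five pebbles can already reach $\{x\}\cup N(x)$—should leave the residual distribution sufficiently wasteful that the sharper bound still holds. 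Apart from this bookkeeping, the proof is just a one-line application of Proposition~\ref{pro:mton}.
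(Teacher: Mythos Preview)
Your approach is exactly the paper's: take an unsolvable distribution of size $m=r-1$, invoke Proposition~\ref{pro:mton}, and invert the resulting inequality. The paper does not do anything more sophisticated than this.

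The difficulty you flag is real, and your proposed resolution does not work. From $n\ge\lfloor\tfrac{(r-6)^2+3}{2}\rfloor$ one indeed gets $2n\ge(r-6)^2+2$, but the step to $(r-5)^2\le 2n-1$ is false: since $(r-5)^2-(r-6)^2=2r-11$, the desired inequality would force $2r-11\le 2n-1-(r-6)^2-? $, and in fact reduces to $r\le 6$. Splitting on the parity of $r-6$ does not help either; in the even case you obtain $r\le\sqrt{2n-2}+6$ and in the odd case $r\le\sqrt{2n-3}+6$, and neither is $\le\sqrt{2n-1}+5$ once $n\ge 2$. Your backup plan via Proposition~\ref{pro:nofive} also stalls: in the ``bad'' case one removes five pebbles and applies Proposition~\ref{pro:nofive} to the remaining $r-6$ pebbles, which again yields exactly $n\ge\lfloor\tfrac{(r-6)^2+3}{2}\rfloor$ and nothing sharper.

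For comparison, the paper's own proof records the equivalence $n<\lfloor\tfrac{(m-5)^2+3}{2}\rfloor\iff \sqrt{2n-1}+5\le m$ and declares the result. Read literally, this gives $\rho(G)-1<\sqrt{2n-1}+5$, hence $\rho(G)<\sqrt{2n-1}+6$; when $2n-1$ is a perfect square this yields $\rho(G)\le\sqrt{2n-1}+5$, but when it is not, one only obtains $\rho(G)\le\lfloor\sqrt{2n-1}\rfloor+6$. So the off-by-one issue you detected is genuinely present in the paper's argument as well; the paper simply does not dwell on it. Your write-up is therefore no less complete than the published proof, but you should not claim that the parity trick closes the gap, because it does not.
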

\begin{proof}
From Proposition~\ref{pro:mton}, we know that if we have $m\ge5$
pebbles on a diameter 2 graph with $n$ vertices one of which is not
reachable, then $n\ge\left\lfloor \frac{(m-5)^{2}+3}{2}\right\rfloor $.
The contrapositive gives that if a graph has $n<\left\lfloor \frac{(m-5)^{2}+3}{2}\right\rfloor $
vertices and $m\ge5$, then any goal vertex is reachable from any
placement of $m$ pebbles and so the rubbling number is larger than
$m$. The result now follows since we have\begin{align*}
n<\left\lfloor \frac{(m-5)^{2}+3}{2}\right\rfloor  & \iff n+1\le\frac{(m-5)^{2}+3}{2}\\
 & \iff\sqrt{2n-1}+5\le m.\end{align*}
\end{proof}
\begin{cor}
We have $\lfloor\sqrt{2n-1}\rfloor+2\le f(n,2)\le\sqrt{2n-1}+5$.
\end{cor}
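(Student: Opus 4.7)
The plan is to simply assemble the two matching halves that have already been proved: the lower bound comes from the explicit family $G_n$, and the upper bound is the previous proposition.

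For the lower bound, I would invoke the earlier corollary stating $\rho(G_n)=\lfloor\sqrt{2n-1}\rfloor+2$ for $n\ge 3$, together with the fact that by construction each $G_n$ has exactly $n$ vertices and $\mathrm{diam}(G_n)=2$. Since $G_n$ is a legitimate diameter 2 graph on $n$ vertices, the definition of $f(n,2)$ as a maximum over all such graphs immediately yields
\[
 f(n,2)\ge \rho(G_n)=\lfloor\sqrt{2n-1}\rfloor+2.
\]

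For the upper bound, I would directly cite the preceding proposition, which states that any diameter 2 graph on $n$ vertices has rubbling number at most $\sqrt{2n-1}+5$. Taking the maximum over all such graphs yields $f(n,2)\le\sqrt{2n-1}+5$.

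Since neither inequality requires any additional argument beyond quoting the two prior results, there is no real obstacle here; the corollary is essentially a packaging statement. The only thing worth noting explicitly is that the lower bound is achieved by a concrete construction (so the inequality is witnessed, not merely asserted), and that the two bounds differ by an additive $O(1)$ term, leaving open the exact value of $f(n,2)$ raised in the earlier problem.
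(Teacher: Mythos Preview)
Your proposal is correct and matches the paper's approach exactly: the corollary is stated without proof there, being an immediate combination of the earlier corollary $\rho(G_n)=\lfloor\sqrt{2n-1}\rfloor+2$ (for the lower bound) and the preceding proposition (for the upper bound). There is nothing to add.
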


\section{Bounds on the optimal rubbling number}

We saw in \cite{BelSie} that $\rho_{\text{opt}}(P_{n})=\left\lceil \frac{n+1}{2}\right\rceil $.
We show that the path requires the most pebbles for optimal rubbling
amongst the graphs with a given number of vertices. The proof follows
the ideas of \cite{Bunde_optimal}. 
\begin{prop}
\label{pro:opt}If $G$ is a tree with $n$ vertices, then $\rho_{\text{opt}}(G)\le\left\lceil \frac{n+1}{2}\right\rceil $. \end{prop}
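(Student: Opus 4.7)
The plan is to proceed by strong induction on $n$. The base cases $n=1,2$ are immediate ($\rho_{\text{opt}}(K_1)=1$ and $\rho_{\text{opt}}(P_2)=2$, both at most $\lceil(n+1)/2\rceil$). For the inductive step with $n\ge 3$, I would split on the structure of $T$ into two cases. Each case reduces the problem to a tree on $n-2$ vertices and adds exactly one extra pebble, which matches the pebble budget since $\lceil(n-1)/2\rceil+1=\lceil(n+1)/2\rceil$.

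\emph{Case 1: $T$ has a vertex $v$ with two leaf neighbors $\ell_1,\ell_2$.} Remove both leaves to form $T^\ast=T-\ell_1-\ell_2$, a tree on $n-2$ vertices, and apply induction to obtain a distribution $p^\ast$ on $T^\ast$ of size at most $\lceil(n-1)/2\rceil$ from which every vertex of $T^\ast$ is reachable. Extend to $T$ by setting $p(v)=p^\ast(v)+1$, $p(\ell_1)=p(\ell_2)=0$, and $p(w)=p^\ast(w)$ otherwise. Any rubbling sequence $s$ in $T^\ast$ that is executable from $p^\ast$ is also executable from $p$ (since $p\ge p^\ast$ pointwise), and because each rubbling move applies a fixed integer shift to the pebble counts, $p_s$ differs from $p^\ast_s$ only by an additional pebble at $v$. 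Hence every $w\in V(T^\ast)$ remains reachable. To reach a leaf $\ell_i$, I run a $p^\ast$-sequence $s$ reaching $v$: this gives $p_s(v)\ge 2$, and the pebbling move $(v,v\a\ell_i)$ delivers a pebble to $\ell_i$.

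\emph{Case 2: no vertex of $T$ has two leaf neighbors.} The tree identity that the number of leaves equals $2+\sum_{v:\deg(v)\ge 3}(\deg(v)-2)$ forces some leaf $\ell$ to have a neighbor $u$ of degree $2$: otherwise every leaf's neighbor would be branching, each such branching vertex could host at most one leaf, so the number of leaves would be at most the number of branching vertices, contradicting the identity. Let $w$ be the other neighbor of $u$. Apply induction to $T^\ast=T-\ell-u$ to get $p^\ast$, and extend by $p(\ell)=1$, $p(u)=0$, $p=p^\ast$ elsewhere. Vertices of $T^\ast$ are reached via $p^\ast$-sequences (which never touch $\ell$ or $u$, so the pebble at $\ell$ is preserved), $\ell$ is reached trivially, and $u$ is reached by first running a $p^\ast$-sequence to place a pebble at $w$ and then applying the strict rubbling move $(\ell,w\a u)$.

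The main obstacle is discovering the correct extension in Case 1. The naive option of placing the extra pebble at a leaf (say $p(\ell_1)=1$) fails: such a pebble is stranded and cannot be combined with a one-pebble reach of $v$ to give the two pebbles at $v$ required to pebble-move to $\ell_2$. The key trick is to deposit the extra pebble at the branching vertex $v$ itself; by the linearity of rubbling-move execution this promotes every ``reach $v$'' sequence from $p^\ast$ into a ``two pebbles at $v$'' sequence from $p$, after which a single further pebbling move reaches either leaf at will.
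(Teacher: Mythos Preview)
Your proof is correct and follows essentially the same approach as the paper: the same two-case split (a vertex with two leaf neighbors versus a leaf whose neighbor has degree~2), the same deletion of two vertices, and the same one-pebble extensions. The only cosmetic difference is that the paper locates the relevant vertices via the endpoints of a longest path (so that maximality forces $v_2$'s extra neighbors to be leaves), whereas you use the leaf-counting identity to justify the existence of the required configuration.
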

\begin{proof}
We use induction on $n$. The statement is clearly true for $n\in\{1,2\}$.
In the inductive step let $n\ge3$ and let $v_{1},v_{2},\ldots,v_{k}$
be the consecutive vertices of a longest path of $G$. Note that $k\ge3$.
We are going to find a subtree $H$ of $G$ with $n-2$ vertices as
shown in Figure~\ref{fig:opt}. Then there is a pebble distribution
$q$ on $H$ with size $\left\lceil \frac{n-2+1}{2}\right\rceil $
from which every vertex of $H$ is reachable. 

If $d(v_{2})=2$ then let $H$ be the subtree of $G$ gotten by deleting
$v_{1}$ and $v_{2}$. Then every vertex of $G$ is reachable from
the pebble distribution $p(v_{1},v_{2},*):=(1,0,q(*))$.

If $d(v_{2})>2$ then let $w$ be a vertex that is adjacent to $v_{2}$
but different from $v_{1}$ and $v_{3}$. By the maximality of the
chosen path, $w$ must be a leaf vertex. Let $H$ be the subtree of
$G$ gotten by deleting $v_{1}$ and $w$. Then every vertex of $G$
is reachable from the pebble distribution $p(v_{1},v_{2},w,*):=(0,q(v_{2})+1,0,q(*))$.

The size of $p$ is $\left\lceil \frac{n-1}{2}\right\rceil +1=\left\lceil \frac{n+1}{2}\right\rceil $
in both cases as desired.
\end{proof}
\begin{figure}
\input{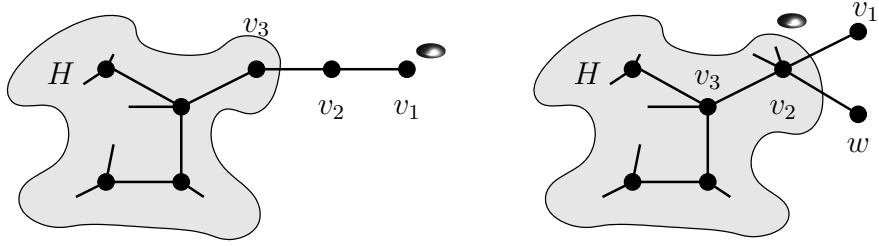}

\caption{\label{fig:opt}The construction of $H$ in the proof of Proposition~\ref{pro:opt}.}

\end{figure}

\begin{cor}
If $G$ is a connected graph with $n$ vertices, then $\rho_{\text{opt}}(G)\le\left\lceil \frac{n+1}{2}\right\rceil $.\end{cor}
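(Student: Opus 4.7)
The plan is to reduce immediately to the tree case already handled by Proposition~\ref{pro:opt}. Given a connected graph $G$ on $n$ vertices, I would choose a spanning tree $T$ of $G$. Since $T$ is a tree on the same vertex set, Proposition~\ref{pro:opt} produces a pebble distribution $p$ on $V(T)=V(G)$ of size at most $\lceil (n+1)/2 \rceil$ from which every vertex of $T$ is reachable using rubbling moves along edges of $T$.

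The key observation is that every edge of $T$ is also an edge of $G$, so any rubbling move (pebbling or strict rubbling) valid in $T$ is also a valid rubbling move in $G$. Hence every executable rubbling sequence witnessing reachability in $T$ remains executable in $G$ and arrives at the same target vertex. Therefore every vertex of $G$ is reachable from $p$ when viewed as a distribution on $G$, which gives $\rho_{\text{opt}}(G)\le |p| \le \lceil (n+1)/2 \rceil$.

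There is essentially no obstacle here; the only thing to be slightly careful about is the implicit monotonicity statement that adding edges to a graph cannot increase the optimal rubbling number, but this is immediate from the definition of a rubbling move in terms of adjacency. No case analysis or induction is required beyond what Proposition~\ref{pro:opt} already provides.
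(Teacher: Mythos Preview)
Your proof is correct and follows essentially the same approach as the paper: take a spanning tree, apply Proposition~\ref{pro:opt}, and use the monotonicity of $\rho_{\text{opt}}$ under edge addition. The paper compresses this into a single line, but the content is identical.
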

\begin{proof}
Let $H$ be a spanning tree of $G$. Then $\rho_{\text{opt}}(G)\le\rho_{\text{opt}}(H)\le\left\lceil \frac{n+1}{2}\right\rceil $
since additional edges cannot increase the optimal rubbling number
of a graph.
\end{proof}
Since $\rho_{\text{opt}}(K_{n})=2$ for $n\ge2$, there is no useful
lower bound for the optimal rubbling number in terms of the number
of vertices. We can still find bounds in terms of the diameter. Similar
results are presented in \cite{Optimal} for the optimal pebbling
number. 
\begin{prop}
If $G$ is a connected graph with diameter $d$, then $\left\lceil \frac{d+2}{2}\right\rceil \le\rho_{\text{opt}}(G)$.\end{prop}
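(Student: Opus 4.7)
The plan is to establish the equivalent inequality $d \le 2\rho_{\text{opt}}(G) - 2$, from which $\rho_{\text{opt}}(G) \ge \lceil (d+2)/2 \rceil$ follows by integrality. Write $k := \rho_{\text{opt}}(G)$, let $p$ be an optimal distribution of size $k$, and let $u, v \in V(G)$ attain the diameter so that $d(u,v) = d$.

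Both $u$ and $v$ are reachable from $p$, so by the No Cycle Lemma there are acyclic rubbling multisets $S_u$ and $S_v$ balanced with $p$ and satisfying $p_{S_u}(u)\ge 1$ and $p_{S_v}(v)\ge 1$. I plan to unravel each multiset into a full binary \emph{dependency tree}, $T_u$ and $T_v$: each internal node corresponds to a rubbling move $(a, b \a c)$, its two children are the nodes supplying the pebbles at $a$ and $b$, and each leaf corresponds to an initial pebble of $p$ consumed. Because each tree edge corresponds to an edge of $G$, a leaf at depth $D$ represents a pebble at graph distance at most $D$ from the root; since a full binary tree with $m$ leaves has depth at most $m-1$, every leaf pebble $\ell$ of $T_u$ satisfies $d(\ell, u) \le m_u - 1 \le k - 1$, and symmetrically for $T_v$, where $m_u, m_v$ denote the respective number of leaves.

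If some pebble $\ell^*$ is a leaf of both $T_u$ and $T_v$, then the triangle inequality yields
\[
d = d(u,v) \le d(u, \ell^*) + d(\ell^*, v) \le (m_u - 1) + (m_v - 1) \le 2k - 2,
\]
which is what is wanted. Otherwise the leaf sets of $T_u$ and $T_v$ are disjoint, so $m_u + m_v \le k$, and the leaves of $T_u$ and $T_v$ sit in the balls $B(u, k-1)$ and $B(v, k-1)$; these would be disjoint in $G$ precisely if $d \ge 2k - 1$, which is the only situation not already handled. To close this case I plan to use that every intermediate vertex on a shortest $u$--$v$ geodesic $u = w_0, w_1, \ldots, w_d = v$ is also reachable, hence has its own dependency tree $T_{w_i}$. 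By choosing these trees so that consecutive ones share a leaf pebble, the pairwise bound $d(w_i, w_{i+1}) \le (m_{w_i} - 1) + (m_{w_{i+1}} - 1)$ applies at each step, and the telescoped sum is to be bounded by $2k - 2$ by controlling how many leaf-appearances each pebble can make (a pebble can appear as a leaf only in trees whose root lies within distance $k-1$ of it).

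The principal obstacle is this disjoint case: one must both arrange the chain of dependency trees so that consecutive members share a leaf pebble (which is not automatic and depends on the local graph structure along the geodesic) and verify that the resulting telescoped bound does not exceed $2k-2$ once the repeated pebble sharing across trees is accounted for via the total pebble budget $|p| = k$.
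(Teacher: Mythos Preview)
Your overlapping case is fine, but the disjoint case is a genuine gap, not a technicality. Nothing forces consecutive dependency trees along the geodesic to share a leaf: the trees $T_{w_i}$ are produced independently by the No~Cycle Lemma, and for a given $w_i$ there may be several acyclic multisets reaching it, none of which need use a pebble that $T_{w_{i-1}}$ used. Even granting shared leaves, the telescoped bound is $\sum_{i=0}^{d-1}\bigl[(m_{w_i}-1)+(m_{w_{i+1}}-1)\bigr]$, in which every interior $m_{w_i}$ is counted twice; since each $m_{w_i}$ can be as large as $k$, the only constraint you have is that each of the $k$ pebbles can appear in at most some bounded number of trees, and you have offered no mechanism tying that count to $2k-2$. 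For the path $P_{2k-1}$ with the natural optimal distribution on every other vertex, the computation happens to come out exactly $2k-2$, but that is because $P_{2k-1}$ has no extra structure; in a general graph the accounting you sketch has no reason to close.

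The paper's argument bypasses all of this with a one–line reduction. Fix a diameter endpoint $v_0$, partition $V(G)$ into BFS levels $L_0,\dots,L_d$, and collapse each level to a single vertex of $P_{d+1}$. Any rubbling move $(a,b\to c)$ in $G$ either becomes a legitimate rubbling move in $P_{d+1}$ (when $a,b$ lie in levels adjacent to that of $c$) or becomes redundant (when $c$ shares a level with $a$ or $b$), so an optimal distribution on $G$ pushes forward to a distribution on $P_{d+1}$ of the same size from which every vertex is reachable. Hence $\rho_{\text{opt}}(G)\ge\rho_{\text{opt}}(P_{d+1})=\lceil (d+2)/2\rceil$. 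This avoids dependency trees entirely and uses only the known value for paths.
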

\begin{proof}
Let $v_{0},v_{1},v_{2},\dots,v_{d}$ be a path where this is a shortest
path between $v_{0}$ and $v_{d}$. Build a Breadth First Search tree
from $v_{0}$. This defines a partition of the vertex set of $G$
into levels: denote the set of vertices at distance $i$ from $v_{0}$
by $L_{i}$. It is clear that $v_{i}\in L_{i}$ holds for $i\in\{0,\dots,d\}$.
Let $P_{d+1}$ denote the graph that is a path of length $d$, and
let $V(P_{d+1})=\{u_{0},\dots,u_{d}\}$. Define a mapping $\phi\colon V(G)\to V(P_{d+1})$
such that $\phi(w)=u_{i}$ for all $w\in L_{i}$. 

We claim that if a vertex $x$ in $G$ is reachable from a pebble
distribution $p$ using a sequence $s$ of rubbling moves, then $\phi(x)$
is reachable in $P_{d+1}$ from the pebble distribution that has $\sum_{w\in L_{i}}p(w)$
pebbles on $u_{i}$.

Suppose that $(a,b\a c)$ is a rubbling move in $s$. That means that
$\{a,c\}$and $\{b,c\}$ are both edges of $G.$ The properties of
the BFS tree implies that $a$ and $c$ (also $b$ and $c$) are either
in the same level or in two neighboring levels. If $a$ and $c$ (or
$b$ and $c)$ are in the same level, then we can delete this rubbling
move from the sequence, since $\phi(a)=\phi(c)$ (or $\phi(b)=\phi(c)$),
so the pebble which is moved to $c$ by $(a,b\a c)$ is already on
$\phi(c)$ in the corresponding rubbling sequence in $P_{d+1}$. 

If $c$ is not in the same level with $a$ and with $b$, then use
the rubbling move $(\phi(a),\phi(b)\a\phi(c))$. It is easy to show
by induction that this new rubbling sequence will move a pebble to
$\phi(x).$

The result now follows from this, since $\left\lceil \frac{d+2}{2}\right\rceil =\rho_{\text{opt}}(P_{d+1})\le\rho_{\text{opt}}(G)$.
\end{proof}
If the diameter of $G$ is $d$, then every vertex is reachable from
the distribution that has $2^{d}$ pebbles on a single vertex. Hence
$\rho_{\text{opt}}(G)\le2^{d}$. The following example shows that
this inequality is sharp. 

\begin{figure}
\input{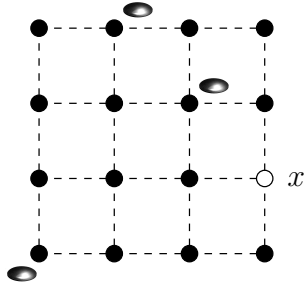}

\caption{\label{fig:Gd}Schematic representation of a graph with diameter $2$.
The dashed lines indicate the fact that any two vertices on a dashed
line are connected by an edge. The goal vertex $x$ is not reachable
from the pebble distribution shown on the figure.}

\end{figure}

\begin{prop}
For all nonnegative integer $d$ there is a graph $G$ with diameter
$d$ and $\rho_{\text{opt}}(G)=2^{d}$.\end{prop}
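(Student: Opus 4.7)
The cases $d=0$ and $d=1$ are handled by $G=K_1$ and $G=K_2$ respectively, since $\rho_{\mathrm{opt}}(K_1)=1=2^0$ and $\rho_{\mathrm{opt}}(K_2)=2=2^1$. For $d\ge 2$, my plan is to exhibit an explicit graph $G_d$ of diameter exactly $d$ (for $d=2$ this is the graph pictured in Figure~\ref{fig:Gd}; for general $d\ge 2$ one builds an analogous layered construction whose dashed-line cliques extend in the same pattern). Combined with the upper bound $\rho_{\mathrm{opt}}(G_d)\le 2^d$ noted in the paragraph preceding the proposition, the matching lower bound $\rho_{\mathrm{opt}}(G_d)\ge 2^d$ will yield the equality.

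The central tool for the lower bound is the standard weight function. Pick a designated goal vertex $x\in V(G_d)$ and set $w_x(v):=2^{-d(x,v)}$, extended linearly to pebble distributions via $W_x(p):=\sum_v p(v)w_x(v)$. I first verify that $W_x$ is nonincreasing along any rubbling move $(a,b\a c)$: from $\{a,c\},\{b,c\}\in E(G_d)$ we get $d(x,c)\ge\max(d(x,a),d(x,b))-1$, hence $w_x(c)\le 2\min(w_x(a),w_x(b))\le w_x(a)+w_x(b)$. Consequently, $x$ being reachable from $p$ forces $W_x(p)\ge 1$.

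To finish, I argue that the specific graph $G_d$ is rigid enough that any distribution $p$ with $|p|\le 2^d-1$ from which every vertex of $G_d$ other than $x$ is reachable is \emph{forced} to have its pebbles sitting at distance exactly $d$ from $x$ (pebbles placed closer to $x$ would be unavailable for covering some far vertex that can only be reached through the far layer, because the dashed-clique structure rules out cheap strict-rubbling shortcuts). Granting this forcing, $W_x(p)\le (2^d-1)\cdot 2^{-d}<1$, so $x$ is not reachable from $p$, contradicting the assumption that \emph{every} vertex of $G_d$ is reachable. This yields $\rho_{\mathrm{opt}}(G_d)\ge 2^d$.

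\textbf{Main obstacle.} The weight lemma is routine; the hard part is the ``forcing'' step, i.e., verifying that $G_d$ is designed so that no clever spreading of fewer than $2^d$ pebbles can simultaneously cover all the far-layer vertices (forcing pebbles to sit out there) \emph{and} provide enough weight at $x$. This is the content of the schematic clique-structure in Figure~\ref{fig:Gd}: the cliques guarantee that reaching any one far vertex either consumes two pebbles located in the far layer itself or a pebble already at distance smaller than $d$ from $x$ that could have been used to improve $W_x(p)$, and a careful bookkeeping of these two alternatives closes the argument.
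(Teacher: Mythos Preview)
Your proposal has a genuine gap, and it stems from a strategic error that makes the argument much harder than it needs to be.

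First, the construction of $G_d$ for $d\ge 3$ is never actually given. ``An analogous layered construction'' is not a definition; without a precise graph you cannot verify the diameter, let alone the forcing claim.

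Second, and more importantly, the ``forcing'' step is the entire content of the lower bound, and you do not prove it. You fix a goal vertex $x$ in advance and then claim that any size-$(2^d-1)$ distribution reaching every other vertex must have all its pebbles at distance $d$ from $x$. You call this the hard part and then assert that ``careful bookkeeping closes the argument'', but no bookkeeping is shown. As stated, the claim is far from obvious: why can't a distribution put a few pebbles at distance $d-1$ from $x$ (boosting $W_x$) and still cover the far layer with the remaining pebbles? Ruling this out would require detailed structural control over $G_d$ that you have not established, especially since $G_d$ itself is undefined.

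The paper avoids this difficulty entirely by \emph{not} fixing $x$ in advance. Its graph has vertex set $\{1,\dots,2^d\}^d$, with two vertices adjacent when they differ in exactly one coordinate, so the diameter is $d$. Given \emph{any} distribution $p$ of size $2^d-1$, the pigeonhole principle applied to each coordinate yields, for every $i$, a value $x_i$ that no pebble uses in coordinate $i$; the vertex $x=(x_1,\dots,x_d)$ then lies at distance exactly $d$ from every pebble. Now the weight argument is immediate: $W_x(p)=(2^d-1)2^{-d}<1$, so $x$ is unreachable. The point is that the unreachable vertex is chosen \emph{after} seeing the distribution, which makes the forcing step trivial rather than the crux.
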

\begin{proof}
Let the vertex set of $G$ be the set of points in $\mathbb{N}^{d}$
with coordinates in $\{1,\ldots,2^{d}\}$. Let two vertices be connected
by an edge if they share all but one of their coordinates. It is clear
that the diameter of $G$ is $d$. Moreover, any two vertices that
has no common coordinate has distance $d$.

Consider a pebble distribution $p$ containing $2^{d}-1$ pebbles.
Since there are more possible values at the $i$-th coordinate than
the number of pebbles, there must be an $x_{i}\in\{1,\ldots,2^{d}\}$
for all $i\in\{1,\ldots,d\}$ such that no pebble has $x_{i}$ as
its $i$-th coordinate. Hence the vertex $x=\{x_{1},x_{2},\dots,x_{d}\}$
does not share any coordinate with any of the vertices containing
pebbles. In this way every pebble is at distance $d$ from $x$. Figure~\ref{fig:Gd}
shows one such pebble distribution on $G$ with vertex $x$ for $d=2$.

Assign a weight $w_{x}(p)$ to the pebble distribution $p$ in the
following way: Each pebble in $p$ gets a $1/2^{i}$ weight if its
distance from $x$ is $i$. Then $w_{x}(p)$ is the sum of the weights
for all pebbles in the distribution. Since the initial distribution
has $2^{d}-1$ pebbles, each at distance $d$ from $x$, its weight
is less than $1$. It is easy to see, that a rubbling move cannot
increase the weight. If it removes two pebbles that are at distance
$i+1$ and puts a pebble to a vertex which is one closer to $x$,
at distance $i$, then the total weight remains the same, but in all
other cases it decreases. This shows that any sequence of rubbling
moves keeps the weight smaller than $1$. However, if $x$ has a pebble,
then the weight is clearly at least $1$. Therefore $x$ is not reachable,
so $p$ is not solvable.
\end{proof}
Note that our example also serves as an example for a diameter $d$
graph with maximum pebbling number since $2^{d}=\rho_{\text{opt}}(G)\le\pi_{\text{opt}}(G)\le2^{d}$.
In fact, our example is essentially a larger version of the example
presented in \cite[Theorem 2.3]{Optimal}. The larger size allows
for a simple proof that also fills a gap in the proof of the original
example.

\bibliographystyle{amsplain}
\bibliography{pebble}

\end{document}